\font\smallit=cmti10
\newtheorem{theorem}{Theorem}
\newtheorem{corollary}[theorem]{Corollary}
\newtheorem{lemma}[theorem]{Lemma}
\theoremstyle{definition}
\newtheorem{definition}{Definition}
\newtheorem{example}{Example}
\newcommand{\GL}{G^\mathcal{L}}
\newcommand{\GR}{G^\mathcal{R}}
\newcommand{\HL}{H^\mathcal{L}}
\newcommand{\HR}{H^\mathcal{R}}
\newcommand{\XL}{X^\mathcal{L}}
\newcommand{\XR}{X^\mathcal{R}}
\def\cgstar{\mathord{\ast}}
\newcommand{\LS}{\mathit{Ls}}
\newcommand{\LSu}{\underline{\mathit{Ls}}}
\newcommand{\RS}{\mathit{Rs}}
\newcommand{\RSo}{\overline{\mathit{Rs}}}
\newcommand{\GB}{\mathbb{GS}}
\newcommand{\Num}[1]{\widehat{#1}}
\begin{document}

\begin{center}
\uppercase{\bf When waiting moves you\\ in scoring combinatorial games}
\vskip 20pt
{\bf Urban Larsson\footnote{Supported by the Killam Trust}}\\
{\smallit Dalhousie University, Canada}\\
{\bf Richard J.~Nowakowski}\\
{\smallit Dalhousie University, Canada}\\
{\bf Carlos P. Santos\footnote{Corresponding author: Centro de Estruturas Lineares e Combinatórias, Faculdade de Ciências, Universidade de Lisboa, Campo Grande, Edifício C6, Piso 2, 1749-016 Lisboa, Portugal; cmfsantos@fc.ul.pt}}\\
{\smallit Center for Linear Structures and Combinatorics, Portugal}\\
\end{center}

\begin{abstract}
Combinatorial Scoring games, with the property `extra pass moves for a player does no harm',
are characterized. The characterization involves an order embedding of Conway's Normal-play games.
Also, we give a theorem for comparing games with scores (numbers) which extends Ettinger's work on dicot Scoring games.
\end{abstract}

\section{Introduction}
\textit{The Lawyer's offer:}
To settle a dispute, a court has ordered you and your opponent to play a
Combinatorial game, the winner (most number of points) takes all.
Minutes before the contest is to begin, your opponent's lawyer approaches you
 with an offer: "You, and you alone, will be allowed a pass move to use once,
at any time in the game,
 but you must use it at some point (unless the other player runs out of moves before you used it)."
Should you accept this generous offer?

We will show when you should accept and when you should decline the offer.
It all depends on whether Conway's Normal-play games (last move wins) can be embedded
in the `game' in an order preserving way.

\medskip
\textit{Combinatorial games} have perfect information, are played by two players who move alternately,
but moreover, the games
finish regardless of the order of moves. When one of the players cannot move,
the winner of the game is declared by some predetermined winning condition.
The two players are usually called Left (female pronoun) and Right (male pronoun).

Many combinatorial games have the property that
the game decomposes into independent sub-positions. A player then has the choice of playing
in exactly one of the sub-positions; the whole position is the \textit{disjunctive sum}
of the sub-positions. The disjunctive sum of positions $G$ and $H$ is written $G+H$.
Such \emph{additive} \cite{Johns2014} games include \textsc{go, domineering, konane, amazons, dots\&boxes} and
also (end-positions of) \textsc{chess} but do not include \textsc{hex} or any type of Maker-Maker and Maker-Breaker games.
See \cite{Beck2006}, for example, for techniques to analyse these latter games.

\textit{Normal-play} games have the last player
to move as the winner; \textit{Mis\`ere-play} games
have that player as the loser. In this paper the focus is on
\textit{Scoring games} in which the player with the greatest score wins.

Finding general results for Scoring games has proven difficult. There are only five contributors known to the authors:
Milnor \cite{Milno1953}, followed by Hanner \cite{Hanne1959},
considered games with no Zugzwang positions; Johnson \cite{Johns2014} abstracted
 from a game played with knots; Ettinger \cite{Ettin1996,Ettin2000} considered
\textit{dicots}, that is games where either both players have a move or neither does;
Stewart \cite{Stewa2011} considered a very general class of games.
We give an overview of
their results in Section  \ref{sec:survey}.

Aviezri Fraenkel coined the terms `Math' games and `Play' games.
The former have properties that mathematicians like. On the other hand,
Play games tend to be harder to analyze, for example \textsc{go, dots-\&-boxes, othello, blokus}
and \textsc{kulami}; moreover they give direction to the mathematical research. `Play' Scoring games
tend to have some common strategic considerations. This paper focuses on three.

\begin{itemize}
\item \textit{Zugzwang} (German for ``compulsion to move'') is a situation where one player is
   put at a disadvantage because he has to make a move when he would prefer to pass
   and make no move.
\item \textit{Bonus/penalty}: In many Scoring games, there are penalties or
bonuses to be awarded when a game finishes.
\item \textit{Greediness principle}: Given two games $G$ and $H$,
Left prefers a game $G$ for a game $H$,
whenever each Left option of $H$ is also a Left option in $G$,
and each Right option of $G$ is also a Right option of  $H$.
\end{itemize}

Zugzwang and the Greediness principle relate to the question posed in the Lawyer's offer. Perhaps one would
believe that if all other things remain equal,  giving Left an extra option is an advantage,
at least no disadvantage, to Left. Surprisingly, this is not always true, indeed it is not true
in \cite{Stewa2011}, nor is it true in Mis\`ere-play games. If there are Zugzwangs in the `game', then you would be inclined to accept, but, as we will see, this does not reveal the whole truth. See Figure \ref{fig:sample} for an example.

Classes of Scoring games $\mathcal{S}$,
like Normal and Mis\`ere-play games, have a defined equivalence,  $\equiv$,
(often called `equality') which gives
rise to equivalence classes, and where $S/\!\!\equiv$ forms a monoid. In Normal-play games
this gives an ordered abelian group.
For the class of all Mis\`ere-play games the monoid has little structure.
Significant results concerning  Mis\`ere-play games only became possible
after Plambeck and Plambeck \& Siegel (see \cite{PlambS2008})
pioneered the approach of restricting the total set of games under consideration.

As shown in Stewart \cite{Stewa2011}, the monoid based on the full class of Scoring games
also has little structure. Following Plambeck \& Siegel's approach, we restrict the subset
of Scoring games under consideration to obtain a monoid with a useful structure.

In Section \ref{sec:terminology} we formally develop the concepts needed for Scoring games,
together with some basic results. In Section \ref{sec:Normal-play} we give some Normal-play background; see \cite{AlberNW2007, Siege2013}
for more on Normal-play games.

Our main results are given in Section \ref{sec:obstacle}.
In Theorem \ref{thm:emb}, we study when Normal-play games can be embedded in a family of Scoring games in an order preserving way.
For games $G$ and $H$, to check if $G\geqslant H$ (see Definition \ref{def:equality}) involves comparisons using all scoring games. Theorem \ref{thm:comp} gives a method that avoids this if one of the games is a number, and in particular answers the question: `who wins?', i.e. is $G\geqslant 0?$

To illustrate the concepts, we refer to games based on \textsc{konane} (see also Section~\ref{sec:ScoringGames}).

\subsection{\textsc{konane}}
 \textsc{konane} is a traditional Hawaiian Normal-play game, played on a
$m\times n$ checker-board with white stones on the white squares and
black on the black squares with some stones removed.
Stones move along rows or columns but not both in the same move.
A stone can jump over an adjacent opponent's stone provided that there is
an empty square on the other side, and the opponent's stone is removed.
Multiple jumps are allowed on a single move but are not mandatory.
When the player to move has no more options then the game is over and, in Normal-play,
the player is the loser.

\textsc{scoring-konane} is played as \textsc{konane}, but when the player to move
has no options then the game is over, and the score is
\textit{the number of stones Left has removed minus the number of stones Right has removed.} Left wins if the score is positive, loses if it is negative and ties if it is zero.

\begin{figure}[ht!]
\begin{center}
\scalebox{.65}{
\psset{unit=1 cm}
\begin{pspicture}(2,0)(3,4)
\psline(0,0)(0,3)(5,3)(5,0)(0,0)
\psline(1,0)(1,3)\psline(2,0)(2,3)\psline(3,0)(3,3)\psline(4,0)(4,3)
\psline(0,1)(5,1)\psline(0,2)(5,2)
\rput{0}(1.50,2.50){\scalebox{2.9}{$\bullet$}}\rput{0}(1.50,1.50){\scalebox{2.9}{$\circ$}}\rput{0}(0.50,0.50){\scalebox{2.9}{$\circ$}}\rput{0}(3.50,0.50){\scalebox{2.9}{$\bullet$}}
\end{pspicture}
\hspace{5 cm}
\begin{pspicture}(4,0)(1,4)

\psline(0,0)(0,1)(5,1)(5,0)(0,0)
\psline(1,0)(1,1)\psline(2,0)(2,1)\psline(3,0)(3,1)\psline(4,0)(4,1)

\rput{0}(0.50,0.50){\scalebox{2.9}{$\circ$}}\rput{0}(1.50,0.50){\scalebox{2.9}{$\bullet$}}\rput{0}(3.50,0.50){\scalebox{2.9}{$\bullet$}}

\end{pspicture}
}
\caption{To the left, a {\sc scoring-konane} Zugzwang, for which the Lawyer's offer should be accepted. In the right-most game, Black should reject it playing first.}\label{fig:sample}
\end{center}
\end{figure}
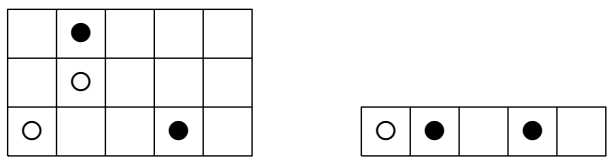

In Figure \ref{fig:sample}, we show that it is not clear whether you should accept the Lawyer's offer if you only get beforehand information that the `game' to play is {\sc scoring-konane}, but not which particular position. In Section~\ref{sec:ScoringGames}, we develop a variation of {\sc scoring konane}, with a bonus/penalty rule, for which you gladly would accept the offer, irrespective of any particular position.

\section{Games terminology}\label{sec:terminology}
Throughout, we assume best play, i.e., for example, `Left wins' means that Left can force a win against all of Right's strategies.
We first give the definitions common to all variants of additive combinatorial games.  Other concepts require the winning condition and these we give in separate sub-sections. We denote by $\mathbb{Np}$, the set of short, Normal-play games.

The word `game' has multiple meanings. Following \cite{AlberNW2007, Siege2013}, when referring to a set of rules,
the explicit game is in \textsc{small capitals},
otherwise, in proofs, `game' and `position' will be used interchangeably.

 Given a game $G$,
 an \textit{option} of $G$ is a position that can be reached in one move; a \emph{Left (Right)
 option}  of $G$ is a position that can be reached in one move by Left (Right).
 The set of Left, respectively Right, options of $G$ are denoted by $\GL $ and $\GR $ and
  we write $G^{\rm L}$ and $G^{\rm R}$ to denote a typical representative of $\GL $ and
  $\GR $ respectively. A combinatorial game is defined recursively as
   $G=\{\GL \mid\GR \}$. Further, $G$ is a \textit{short game} if it has a form
   $\{\GL \mid\GR \}$ such that $\GL $ and $\GR $ are finite
    sets of short games. A game $H$ is a \textit{follower} of a game $G$
 if there is any sequence of moves (including the empty sequence, and not necessarily alternating) starting at $G$
 that results in the game $H$.

The \textit{disjunctive sum} of the games $G$ and $H$ is the game
  $G+H$, in which a player may move in $G$ or in $H$, but not both.
  That is, $G+H = \{\GL +H, G+\HL\mid\GR +H, G+\HR\}$
  where, for example, if $\GL =\{G^{\rm L_1},G^{\rm L_2},\ldots\}$ then
  $\GL +H = \{G^{\rm L_1}+H,G^{\rm L_2}+H,\ldots\}$.
Clearly, the disjunctive sum operation is associative and commutative.

There is a well-known operation of `turning the board around',
that is, reversing the roles of both players.
In Normal-play games, given a game $G$, this new game is the additive inverse
of the old and the `turned' board is denoted by $-G$ giving rise
to the desirable statement $G-G=0$, indicating
that there is no advantage to either Left or Right in
$G-G$. In Scoring games and also in Mis\`ere-play, the underlying
structure is not necessarily a group, so for most games $G - G\ne 0$.
To avoid misleading equations, we call this operation \textit{conjugation}, and
denote the \textit{conjugate} by
 $\sim \! G$. If
$G=\{G^{\rm L_1},G^{\rm L_2},\ldots \mid G^{\rm R_1},G^{\rm R_2},\ldots \}$
then recursively
$\sim \! G = \{\sim \! G^{\rm R_1},\sim \! G^{\rm R_2},\ldots \mid \, \sim \! G^{\rm L_1},\sim \! G^{\rm L_2},\ldots\}$.

The notation $G=\{\GL\mid \GR\}$ has been identified with
Normal-play games in the literature. In this paper, since we will refer
to both Normal- and Scoring-play as different entities,
we will use $\langle \,\GL\mid \GR\, \rangle$ to refer to Scoring games
in order to avoid confusion.

The classes and subclasses of games that are mentioned in this paper, and in
papers about Mis\`ere-play games, all have some common
 properties and have been given a designation.
  \begin{definition} Let $\mathcal{U}$ be a set of combinatorial games.
Then $\mathcal{U}$ is a \emph{universe} if
\begin{itemize}
\item[(1)] $\mathcal{U}$ is closed under disjunctive sum;
\item[(2)] $\mathcal{U}$ is closed under taking options, that is, if $G\in\mathcal{U}$
then every $G^{\rm L}\in\GL $ and $G^{\rm R}\in\GR $ are in $\mathcal{U}$;
\item[(3)] $\mathcal{U}$ is closed under conjugation, that is, if $G\in\mathcal{U}$ then $\sim \! G\in\mathcal{U}$.
\end{itemize}
\end{definition}

\subsection{Scoring games terminology}\label{sec:ScoringGames}

All of the previous works on Scoring games used different terminology and notation
although the concepts were very similar. We unify the
notation; for example, even though players sometimes have a means of keeping score during the play,
the score is uniquely determined only when the player to move has no options.

\begin{definition}\label{basic}
Let $G$ be a game with no Left options. Then we write $\GL=\emptyset^{\ell}$
to indicate that, if Left to move, the game is over and the score is the real number $\ell$.
Similarly, if $\GR=\emptyset^r$, and it is Right's move, there are no Right options and the score is $r$.
We refer to $\emptyset^s$ as an \textit{atom} or, if needed for specificity, the $s$\textit{-atom}.
Scores will always be real numbers with the convention: Left wins if $s>0$;
Right wins if $s<0$ and the game is a tie (drawn) if $s=0$. Since
$\langle \, \emptyset^s\mid\emptyset^{s} \, \rangle$
results in a score of $s$ regardless of who moves next, we call this game $s$.
\end{definition}

 Games $\langle \, \emptyset^{\ell}\mid\emptyset^{r} \, \rangle$
with any of the three conditions $\ell<r$, $\ell=r$
and $\ell>r$ can occur in practice.
Allowing only $\ell=r$ gives the scoring universe studied in \cite{Stewa2011}.

Since addition and subtraction of real numbers does not pose a problem, we will revert
to using $-s$ instead of $\sim\! s=\langle \emptyset^{-s}\mid
\emptyset^{-s}\rangle$ for the conjugate of $s$.

\begin{example}
\textsc{diskonnect}\footnote{The first world championships, at
TRUe games May 2014, were played on a $8\times 8$ board with the middle
$2\times 2$ square empty. The authors placed 4th, 11th and 2nd respectively,
Paul Ottaway placed 1st and Svenja Huntemann 3rd.}
is played as \textsc{scoring-konane}, but with an additional bonus/penalty rule at the end: a
piece is \emph{insecure} if it can be captured by the opponent with a well-chosen
sequence of moves (ignoring the alternating-move condition), and otherwise,
the piece is \emph{safe}. When a player to move, say Left (Black), has no more
options then the game is over and Right (White)
removes all of Left's insecure stones on the board. This is the penalty a player
has for running out of options.
The score when the game ends is \textit{the number of stones Left has removed
minus the number of stones Right has removed.}
\end{example}

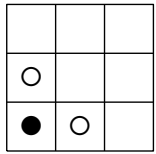
\begin{figure}[ht!]
\begin{center}
\scalebox{0.65}{
\psset{unit=1cm}
\begin{pspicture}(0,0)(3,4)
\psline(0,0)(0,3)(3,3)(3,0)(0,0)
\psline(1,0)(1,3)\psline(2,0)(2,3)
\psline(0,1)(3,1)\psline(0,2)(3,2)
\rput{0}(0.50,1.50){\scalebox{2.9}{$\circ$}}
\rput{0}(0.50,0.50){\scalebox{2.9}{$\bullet$}}
\rput{0}(1.50,0.50){\scalebox{2.9}{$\circ$}}
\end{pspicture}
}
\caption{An endgame in \textsc{konane}, \textsc{scoring-konane} or \textsc{diskonnect}.}\label{fig:disk1}
\end{center}
\end{figure}

The game in Figure \ref{fig:disk1} is  $1=\{ 0 \mid \emptyset \}$ in (Normal-play)
\textsc{konane},
$\langle\langle \emptyset^1\mid \emptyset^{1} \rangle \mid \emptyset^{0} \rangle =
\langle 1\mid \emptyset^{0}  \rangle $ in \textsc{scoring-konane}
and $\langle  1 \mid \emptyset^{2}  \rangle $ in \textsc{diskonnect}.

In \textsc{diskonnect}, if Left moves, she jumps one stone for a score of 1 and the game is over. If it is
Right to move, he has no moves and the bonus clause is invoked
and since there are two white stones that could be taken, the score is 2. Observe
that even though Left cannot, in play, actually take both stones, for each stone
 there is a legal sequence that leads to it being removed and so, each is insecure.

Scoring games  can be defined in a recursive manner using the atoms.
\begin{definition}\label{def:S}
The games born on Day 0 are
$S_0=\{\langle \, \emptyset^{\ell}\mid \emptyset^r \, \rangle : \ell,r\in \mathbb{R} \}$.
For $i=0,1,2,\ldots$, let $S_{i+1}$, the games born by Day $i+1$, be the set of games of the form
$\langle \,  {\cal G}\mid {\cal H} \, \rangle$, where ${\cal G}$ and
${\cal H} $ are non-empty finite subsets of $S_{i}$, or where either or both can be a single atom. The games in
$S_{i+1}\setminus S_{i}$ are said to have \textit{birthday}
$i+1$. Let $\mathbb{S}=\cup_{i\geqslant 0}S_i$.
\end{definition}

We now make explicit the effect of taking a disjunctive sum of Scoring games. (See Figure \ref{fig:gametree} for an example.)

\begin{definition}
Given two Scoring games $G$ and $H$, the disjunctive sum is given by:
\begin{eqnarray*}
G+H&=& \langle \, \emptyset^{\ell_1+\ell_2}\mid\emptyset^{r_1+r_2} \, \rangle, \quad\textrm{ if $G=\langle \, \emptyset^{\ell_1}\mid\emptyset^{r_1} \, \rangle$ and
$H=\langle \, \emptyset^{\ell_2}\mid\emptyset^{r_2} \, \rangle$;}\\
&=&\langle \, \emptyset^{\ell_1+\ell_2}\mid\GR +H,G+\HR \, \rangle, \textrm{ if
$G=\langle \, \emptyset^{\ell_1}\mid\GR  \, \rangle$  and
$H=\langle \, \emptyset^{\ell_2}\mid\HR \, \rangle$},\\
&{}&\qquad \textrm{ and at least one of $\GR $ and $\HR$
is not empty;}\\
&=&\langle \, \GL +H,G+\HL\mid \emptyset^{r_1+r_2} \, \rangle, \textrm{ if
$G=\langle \, \GL \mid\emptyset^{r_1} \, \rangle$  and
$H=\langle \, \GL \mid\emptyset^{r_2} \, \rangle$},\\
&{}&\qquad \textrm{ and at least one of $\GL $ and $\HL$
is not empty;}\\
&=&\langle \, \GL +H,G+\HL\mid\GR +H,G+\HR \, \rangle,
\textrm{ otherwise.}
\end{eqnarray*}
\end{definition}
Note that the option $\GL +H$ does not exist if $\GL $ is empty
(there is no addition rule for adding an empty set of options to a game).
For example,
consider $\langle \, \emptyset^1\mid 2 \, \rangle+ \langle \,  2\mid -1 \, \rangle$.
 If Left plays, she has no move in the first component, but does have a move,
so the score in the first component is not yet triggered. She must move to
 $\langle \,  \emptyset^1\mid 2 \, \rangle + 2$, whereupon Right moves to $2+2=4>0$,
and Left wins.
 If Right plays, he should move to $\langle \,   \emptyset^1\mid 2  \, \rangle + (-1)$;
 now Left has no move in the sum, and the score of Left's empty set of options is
triggered, giving a total
 score of $1-1=0$, a tie.
Note also that the addition of numbers is covered by
$p+s = \langle \, \emptyset^p\mid\emptyset^{p} \, \rangle +
\langle \, \emptyset^s\mid\emptyset^{s} \, \rangle=
\langle \, \emptyset^{p+s}\mid\emptyset^{p+s} \, \rangle$.

Game trees are a standard way to represent combinatorial games, and for Scoring games,
each leaf is typically labelled with a score of game; here, if one of the players run out of moves, the node
must have an atom attached to it (Figure~\ref{fig:gametree}). For Scoring (and Normal-play) games, we use the
convention that edges down and to the right represent a Right move, those down and
to the left represent a Left move.

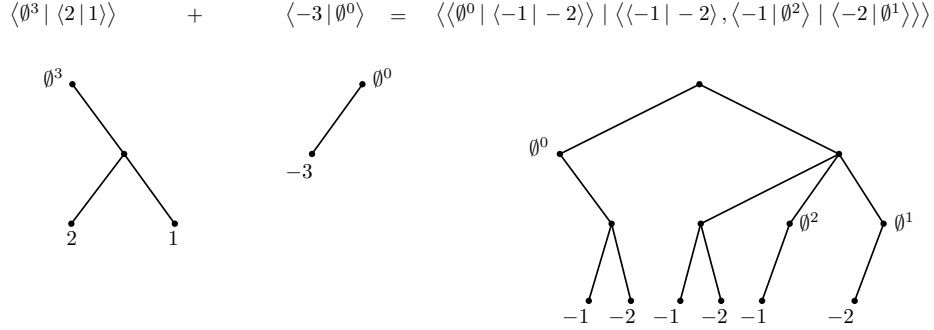
\begin{figure}[ht!]
\begin{center}
\scalebox{0.8}{
\psset{xunit=1.0cm,yunit=1.0cm,algebraic=true,dotstyle=o,dotsize=3pt 0,linewidth=0.8pt,arrowsize=3pt 2,arrowinset=0.25}
\begin{pspicture*}(-2.8,-1)(14,5)
\psline(-1.54,3.14)(-0.68,1.98)
\psline(-0.68,1.98)(-1.56,0.82)
\psline(-0.68,1.98)(0.16,0.82)
\rput[tl](-1.64,0.7){$2$}
\rput[tl](0.05,0.7){$1$}
\rput[tl](-2.02,3.4){$\emptyset^3$}
\psline(3.28,3.14)(2.44,1.98)
\rput[tl](2,1.8){$-3$}
\rput[tl](3.45,3.4){$\emptyset^0$}
\psline(8.88,3.14)(6.56,1.98)
\psline(8.88,3.14)(11.2,1.98)
\psline(6.56,1.98)(7.42,0.82)
\psline(7.42,0.82)(7.04,-0.46)
\psline(7.42,0.82)(7.74,-0.46)
\rput[tl](6.6,-0.6){$-1$}
\rput[tl](7.37,-0.6){$-2$}
\rput[tl](6.05,2.25){$\emptyset^0$}
\psline(11.2,1.98)(8.9,0.82)
\psline(8.9,0.82)(8.56,-0.46)
\psline(8.9,0.82)(9.24,-0.46)
\rput[tl](8.1,-0.6){$-1$}
\rput[tl](8.9,-0.6){$-2$}
\psline(11.2,1.98)(10.38,0.82)
\psline(10.38,0.82)(9.92,-0.46)
\rput[tl](9.55,-0.6){$-1$}
\rput[tl](10.54,1){$\emptyset^2$}
\psline(11.2,1.98)(11.94,0.82)
\psline(11.94,0.82)(11.46,-0.46)
\rput[tl](11,-0.6){$-2$}
\rput[tl](12.12,1){$\emptyset^1$}
\rput[tl](-2.58,4.5){$\left<\emptyset^3\,|\,\left<2\,|\,1\right> \right>$}
\rput[tl](2,4.5){$\left<-3\,|\,\emptyset^0\right>$}
\rput[tl](4.5,4.5){$\left< \left<\emptyset^0\,|\,\left<-1\,|\,-2\right> \right> \,|\,\left< \left<-1\,|\,-2\right>, \left<-1\,|\,\emptyset^2\right>\,|\, \left<-2\,|\,\emptyset^1\right> \right> \right>$}
\rput[tl](0.36,4.4){$+$}
\rput[tl](3.74,4.3){$=$}
\begin{scriptsize}
\psdots[dotstyle=*](-1.54,3.14)
\psdots[dotstyle=*](-0.68,1.98)
\psdots[dotstyle=*](-1.56,0.82)
\psdots[dotstyle=*](0.16,0.82)
\psdots[dotstyle=*](3.28,3.14)
\psdots[dotstyle=*](2.44,1.98)
\psdots[dotstyle=*](8.88,3.14)
\psdots[dotstyle=*](6.56,1.98)
\psdots[dotstyle=*](11.2,1.98)
\psdots[dotstyle=*](7.42,0.82)
\psdots[dotstyle=*](7.04,-0.46)
\psdots[dotstyle=*](7.74,-0.46)
\psdots[dotstyle=*](8.9,0.82)
\psdots[dotstyle=*](8.56,-0.46)
\psdots[dotstyle=*](9.24,-0.46)
\psdots[dotstyle=*](10.38,0.82)
\psdots[dotstyle=*](9.92,-0.46)
\psdots[dotstyle=*](11.94,0.82)
\psdots[dotstyle=*](11.46,-0.46)
\end{scriptsize}
\end{pspicture*}
}
\caption{The disjunctive sum of two game trees.}\label{fig:gametree}
\end{center}

\end{figure}

We now turn our attention to the partial order of Scoring games. We will use Left- and Right-scores, obtained from alternating play, for comparison of Scoring games.

   \begin{definition}\label{SStops}
    The \emph{Left-score} and the \emph{Right-score} of a scoring game $G$ are:
   \begin{eqnarray}
    \LS(G) &=& \begin{cases}r
                   & \text{if $\GL  =\emptyset^\ell$}, \\
                      \max(\RS(G^{\rm L})) & \text{otherwise};\end{cases} \\
    \RS(G) &=& \begin{cases}r
                   & \text{if $\GR =\emptyset^r$}, \\
                      \min(\LS(G^{\rm R})) & \text{otherwise}.\end{cases}
    \end{eqnarray}
    \end{definition}

    \begin{definition}\label{def:equality}(Inequality in Scoring Universes)\\
Let $\mathcal{U}\subseteq \mathbb{S}$ be a universe of combinatorial Scoring games,
and let $G, H\in {\cal U}$. Then
    $G\geqslant  H$ if $$\LS(G+X)\geqslant  \LS(H+X)$$ and $$\RS(G+X)\geqslant  \RS(H+X),$$
for all $X\in \mathcal{U}$.
    \end{definition}
     For game equivalence, we replace all inequalities in the definition, by equalities. It follows that any universe of Scoring games
$\mathcal{U}\subseteq \mathbb{S}$ is a monoid, that is
$0 + X = X$ for any $X\in {\cal U}$.

       \section{A natural scoring universe}\label{sec:obstacle}
Normal-play games can be regarded as Scoring games,
with all scores being zero. From a mathematical perspective,
it is of  interest to know when they can be embedded in a scoring universe in an order preserving way.

\begin{definition}\label{defn:naturalembed}
Let $\mathcal{U}\subseteq \mathbb{S}$ be a universe of Scoring games
with $\mathbb{R}\subset \mathcal U$.
Define the \emph{Normal-play mapping} $\zeta:\mathbb{Np} \hookrightarrow \mathcal{U}$ as
$\zeta(G)=\Num{G}\in \mathcal{U}$ where $\Num{G}$
is the game obtained by replacing each empty set of options in the
followers of $G\in \mathbb{Np}$, with the $0$-atom,~$\emptyset^0$.
\end{definition}

Since each atom in $\Num{G}$ is the $0$-atom, the
outcome is obviously a tie when the game is played in isolation.
But the importance of the $\mathbb{Np}$-mapping is revealed in Definition \ref{def:natural}, and
later in Theorem~\ref{thm:emb}.
The mapping $f: X\rightarrow Y$ is an \emph{order-embedding} if, for all $x_1,x_2\in X$, $x_1\leqslant x_2$ if and only if $f(x_1)\leqslant f(x_2)$.

\begin{definition}\label{def:natural}
A scoring universe $\mathcal{U}$ is \emph{natural} if the $\mathbb{Np}$-mapping $\zeta$ is an order-embedding.
\end{definition}

Tactically, the games $\Num{n}$ can be regarded as \textit{waiting-moves}, for $n$ an integer.
 We will say that Left (Right) \textit{waits} to mean that she uses
one of the waiting-moves in $G + \Num{n}$ if $n$ is positive (negative).

For example, in the disjunctive sum $\Num{2}+\langle \,  -4\mid\langle \, -3  \mid 5  \, \rangle \, \rangle$
Left is happy to play her waiting-move giving the option
$\Num{1}+\langle \, -4\mid\langle \,  -3\, |\, 5  \, \rangle  \, \rangle$; Right responds to
$\Num{1}+\langle \,  -3 \mid5  \, \rangle$; Left again waits giving
$\Num{0}+\langle \,  -3 \mid5  \, \rangle = \langle \,  -3 \mid5  \, \rangle$; Right is forced to move to $5$.
Further analysis shows that Left much prefers
$\Num{2}+\langle \,  -4\mid\langle \,  -3 \mid5  \, \rangle  \, \rangle$ to
$\Num{1}+\langle \,  -4\mid\langle \,  -3 \mid5  \, \rangle  \, \rangle$.

However, there is an even more compelling reason coming from tactical situations
that appear naturally
within the games we play. We
invite the reader, playing Left, to contemplate which of the two games $G_1$ and
$G_2$, in Figure~\ref{fig:disk4} they would prefer to add to a disjunctive sum of Scoring games.

\begin{figure}[ht!]
\begin{center}
\scalebox{0.65}{
\psset{unit=1cm}
\begin{pspicture}(0,0)(3,4)
\psline(0,0)(0,3)(5,3)(5,0)(0,0)
\psline(1,0)(1,3)\psline(2,0)(2,3)\psline(3,0)(3,3)\psline(4,0)(4,3)
\psline(0,1)(5,1)\psline(0,2)(5,2)
\rput{0}(0.50,0.50){\scalebox{2.9}{$\bullet$}}\rput{0}(1.50,0.50){\scalebox{2.9}{$\circ$}}\rput{0}(3.50,0.50){\scalebox{2.9}{$\circ$}}
\end{pspicture}
\hspace{5 cm}
\begin{pspicture}(0,0)(3,4)
\psline(0,0)(0,3)(3,3)(3,0)(0,0)
\psline(1,0)(1,3)\psline(2,0)(2,3)
\psline(0,1)(3,1)\psline(0,2)(3,2)
\rput{0}(2.50,1.50){\scalebox{2.9}{$\circ$}}
\rput{0}(0.50,0.50){\scalebox{2.9}{$\bullet$}}\rput{0}(1.50,0.50){\scalebox{2.9}{$\circ$}}
\end{pspicture}
}
\caption{The \textsc{Diskonnect}-games $G_1$ and $G_2$.}\label{fig:disk4}
\end{center}
\end{figure}
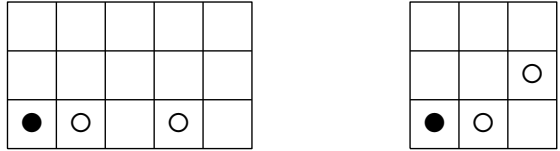

From a Normal-play point of view, it is natural that more waiting-moves
give a tactical advantage over fewer and so the order-embedding
of Normal-play games is something to be desired if
we wish to use any Normal-play intuition in Scoring play. More concretely,
in $G_1$, Figure~\ref{fig:disk4}, by `general principles',
Left's jumping only one stone is clearly dominated
(we invite the reader to consider the intuition of this) so
\[ G_1 = \langle \, 2\mid \emptyset^2  \, \rangle = 2 + \langle \, 0\mid \emptyset^0  \, \rangle =
2+\langle \, \langle \,  \emptyset^0\mid \emptyset^0  \, \rangle \mid \emptyset^0  \, \rangle = 2 + \Num{1}
\]
and
\[G_2 = \langle \, 1+\langle \,  1\mid \emptyset^1 \, \rangle \mid \emptyset^2  \, \rangle
= 2 + \langle \, \langle \, \langle \,  \emptyset^0\mid \emptyset^0
\, \rangle \mid \emptyset^0  \, \rangle \mid \emptyset^0  \, \rangle = 2+\Num{2}
\]
Here, $G_2$, with the extra waiting-move, seems preferable
over $G_1$.

Games of the form $\langle \,  \emptyset^\ell\mid\GR  \, \rangle $
and $\langle \, \GL \mid\emptyset^r \, \rangle $ (including $\langle \,  \emptyset^\ell\mid  \emptyset^r\, \rangle $)
will be called \textit{atomic} in general, or
\textit{Left-atomic} and \textit{Right-atomic}, respectively, if more precision is required.

\begin{definition}\label{def:guaranteed}
Let $G\in \mathbb{S}$ be an atomic game. Then $G$ is \emph{stable} if $\LS(G)\leqslant \RS(G)$, and $G$ is guaranteed if, for every atom $\emptyset^s$ which is a follower of $\GL$, and every atom $\emptyset^t$ which is a follower of $\GR$, $s\leqslant t$. In general, a game in $\mathbb{S}$ is \textit{stable} (\emph{guaranteed}) if every atomic follower is stable (guaranteed). Let $\GB$ be the class of all guaranteed Scoring games.
\end{definition}

It is clear that $\GB$
 is a universe, since `guaranteed' is an hereditary property;
 moreover, this class is also generated recursively when $\mathbb{S}$ is generated,
see Definition~\ref{def:S}. Note that $G\in \GB$ implies that $G$ is stable.

To motivate our approach for guaranteed games,
we first prove an intermediate result on the stable
games. In particular, the existence of `hot-atomic-games', such as $\langle \,  \emptyset^3\mid-2 \, \rangle$, prevents $\zeta$ from being
an order-embedding.

\begin{theorem}\label{thm:sbU}
If $\mathcal{U}$ is a natural universe, then each game in $\mathcal{U}$ is stable.
\end{theorem}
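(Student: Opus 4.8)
The plan is to prove the contrapositive: I will show that if $\mathcal{U}$ contains a game that is not stable, then the Normal-play mapping $\zeta$ fails to preserve order, so $\mathcal{U}$ is not natural (Definition~\ref{def:natural}). The strategic content, flagged by the remark about ``hot-atomic-games'' such as $\langle \emptyset^3\mid -2\rangle$, is that an unstable game realizes its large Left-atom only as a Zugzwang, i.e.\ precisely when Left has no move; handing Left a waiting-move destroys that Zugzwang and \emph{lowers} her score, so it will witness a violation of the monotonicity of waiting-moves that $\zeta$ is supposed to respect.

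First I would set up the reductions. Assume some $H\in\mathcal{U}$ is not stable. By Definition~\ref{def:guaranteed} this means $H$ has an atomic follower $G$ with $\LS(G)>\RS(G)$, and since $\mathcal{U}$ is a universe it is closed under taking options, so $G\in\mathcal{U}$. Every atomic game is Left-atomic or Right-atomic; using that $\mathcal{U}$ is closed under conjugation, together with the routine identities $\LS(\sim\! G)=-\RS(G)$ and $\RS(\sim\! G)=-\LS(G)$ (a direct induction on Definition~\ref{SStops}, which shows conjugation preserves the strict inequality while swapping the two atomic types), I may assume without loss of generality that $G=\langle\,\emptyset^{\ell}\mid \GR\,\rangle$ is Left-atomic, with $\ell=\LS(G)>\RS(G)=\min(\LS(G^{\rm R}))$.

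The heart of the argument is a short computation comparing the two smallest waiting-moves, $\Num{0}=\langle\,\emptyset^0\mid\emptyset^0\,\rangle=0$ and $\Num{1}=\langle\,\Num{0}\mid\emptyset^0\,\rangle$, against the witness $X=G$. Because $G$ is Left-atomic it has \emph{no} Left option, so in $\Num{1}+G$ Left's only move is the waiting-move, giving the single Left option $\Num{0}+G=G$; in particular the favourable atom $\ell$ cannot be triggered, since Left still has a move. Hence by Definition~\ref{SStops}, $\LS(\Num{1}+G)=\RS(\Num{0}+G)=\RS(G)$, whereas $\LS(\Num{0}+G)=\LS(G)=\ell$. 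Instability $\ell>\RS(G)$ then yields $\LS(\Num{0}+G)>\LS(\Num{1}+G)$. On the other hand, $0\leqslant 1$ in $\mathbb{Np}$, so if $\zeta$ preserved order we would need $\Num{0}\leqslant \Num{1}$ in $\mathcal{U}$, which by Definition~\ref{def:equality} forces $\LS(\Num{0}+X)\leqslant \LS(\Num{1}+X)$ for \emph{every} $X\in\mathcal{U}$; taking $X=G$ contradicts this. Therefore $\zeta$ is not an order-embedding, $\mathcal{U}$ is not natural, and the contrapositive is established.

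I expect the only delicate step to be the disjunctive-sum bookkeeping, namely checking that in $\Num{1}+G$ Left is genuinely forced onto the waiting-move and cannot cash in $\ell$, and that $\Num{0}+G=G$; this is exactly where instability is exploited, and it is worth verifying directly against the four-case sum rule rather than by appeal to intuition. The conjugation reduction is the other point to handle carefully: one must confirm that the Right-atomic case really does collapse to the Left-atomic one (alternatively, one can run the mirror-image computation with $\Num{0}$, $\Num{-1}$ and $\RS$, but invoking closure under conjugation is cleaner).
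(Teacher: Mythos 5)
Your proof is correct and follows essentially the same route as the paper: both use the non-stable Left-atomic game itself as the distinguishing position to show $\LS(\Num{0}+X)>\LS(\Num{1}+X)$, so that $\Num{0}\leqslant\Num{1}$ fails even though $0\leqslant 1$ in $\mathbb{Np}$. The only differences are cosmetic: the paper takes $X=G-k$ with $\ell>k>\RS(G)$ so the two Left-scores straddle zero (an immaterial shift), while you use $X=G$ directly and, helpfully, make explicit the reductions (to an atomic follower via closure under options, and to the Left-atomic case via conjugation) that the paper leaves implicit.
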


\begin{proof}
Suppose that $\mathcal{U}$ contains a non-stable game $G = \langle \, \emptyset^\ell \mid \GR  \, \rangle\in\mathcal{U}$, where $\ell > \RS(G)$.
Choose $k$ where $\ell>k>\RS(G)$ and put $X=G-k$. Consider the Left-score of $0+X$:
 \begin{eqnarray*}
 \LS(0+X) &=& \LS(\langle \, \emptyset^\ell\mid\GR \, \rangle-k),\quad \\
 &=&\ell-k>0, \mbox{since Left has no move};
 \end{eqnarray*}
 and the Left-score of $\Num{1} +X$:
  \begin{eqnarray*}
  \LS(\,  \Num{1}+X) &=& \LS( \, \Num{1}+\langle \, \emptyset^\ell\mid\GR  \, \rangle-k),\quad \\
  &=&\RS(\langle \, \emptyset^\ell\mid\GR  \, \rangle)-k, \mbox{since Left only had the waiting-move},\\
  &=&\RS(G)-k < 0.
  \end{eqnarray*}
 Since $\LS(0+X)>\LS( \, \Num{1}+X)$, it follows from
 Definition \ref{def:equality}
  that
 $0$ is not less than $\Num{1}$ and consequently $\mathcal{U}$
 is not natural.
\end{proof}

Universes in which every game is stable are not necessarily natural.

\begin{example}\label{ex:non-natural}
For example,  in $\mathbb{Np}$ the game 0 has many forms,
and in a natural universe they all are mapped to the same game.
However, in $\mathbb{Np}$, let $*=\{0\mid 0\}$, which gives $0=\{*\mid*\}$.
Consider the Scoring-game
$G=\langle \, \emptyset^2\mid\langle \, \langle \, -5\mid5\, \rangle\mid-5 \, \rangle \, \rangle$, which
is stable because $\LS(G)=2<\RS(G)=5$.
Now $\LS( \, \Num{0}+G)>0$ and $\LS(  \Num{\{*\mid*\}}+G)<0$,
i.e. $\Num{\{*\mid*\}}\ne \Num{0} = 0$ and
the two representations of 0
are mapped to different games.
\end{example}

We already know that $\GB$ is a stable universe, and the non-existence of hot-atomic-games ensures that it is also natural.

\begin{theorem}\label{thm:emb}
The universe $\GB$ is natural.
\end{theorem}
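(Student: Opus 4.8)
The plan is to show that $\zeta$ is an order-embedding, i.e. that for $G,H\in\mathbb{Np}$ one has $G\geqslant H$ in $\mathbb{Np}$ if and only if $\Num G\geqslant\Num H$ in $\GB$. First I would record three routine structural facts. (i) $\zeta$ is a monoid homomorphism, $\Num{G+H}=\Num G+\Num H$ as identical scoring forms: this is immediate from the disjunctive-sum rules together with $0+0=0$, once one checks that an empty set of options in $G+H$ corresponds to empty sets in both summands. (ii) $\zeta$ commutes with conjugation, $\Num{-G}=\,\sim\!\Num G$, because $\sim\!\emptyset^0=\emptyset^0$. (iii) In any scoring universe $\geqslant$ is translation-invariant: if $A\geqslant B$ then $A+C\geqslant B+C$, seen by applying Definition \ref{def:equality} with the test game $X'=C+X$.

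Then I would reduce everything to a single Central Lemma: for $K\in\mathbb{Np}$, $K\geqslant 0$ in $\mathbb{Np}$ if and only if $\Num K\geqslant 0$ in $\GB$. Via (ii) and the score identities $\LS(\sim\!G)=-\RS(G)$, $\RS(\sim\!G)=-\LS(G)$, this also yields the dual $K\leqslant 0\Leftrightarrow\Num K\leqslant 0$; applying both to $H-H$ (which is both $\geqslant 0$ and $\leqslant 0$ in $\mathbb{Np}$) gives $\Num H+\sim\!\Num H=\Num{H-H}\equiv 0$. The theorem then follows from the Central Lemma and (i)--(iii): for the forward direction, $G\geqslant H\Rightarrow G-H\geqslant 0\Rightarrow\Num{G-H}\geqslant 0\Rightarrow\Num G+\sim\!\Num H\geqslant 0$, and adding $\Num H$ via (iii) gives $\Num G+\Num{H-H}\geqslant\Num H$, i.e. $\Num G\geqslant\Num H$ since $\Num{H-H}\equiv 0$; the backward direction runs the same chain in reverse, using that $\Num G\geqslant\Num H$ forces $\Num{G-H}\geqslant 0$ and hence $G-H\geqslant 0$.

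For the easy (converse) half of the Central Lemma I would exhibit a witness. Take $X=\langle\,\emptyset^{-1}\mid\emptyset^{1}\,\rangle\in\GB$. Since $X$ has no moves for either player, in $\Num K+X$ every move is a move of $K$ and every atom of $\Num K$ is $\emptyset^0$, so $\Num K+X$ plays exactly as the Normal-play game $K$ with terminal score $-1$ if the stuck player is Left and $+1$ if it is Right. Hence $\RS(\Num K+X)=1$ exactly when Right moving first loses $K$ (Left makes the last move), and $=-1$ otherwise. If $K\not\geqslant 0$ then Right moving first wins $K$, so $\RS(\Num K+X)=-1<1=\RS(X)$, witnessing $\Num K\not\geqslant 0$.

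The hard part is the forward half: if $K\geqslant 0$ in $\mathbb{Np}$ then $\LS(\Num K+X)\geqslant\LS(X)$ and $\RS(\Num K+X)\geqslant\RS(X)$ for every $X\in\GB$ (this is exactly $\Num K\geqslant 0$). I would prove both inequalities simultaneously by induction on $b(K)+b(X)$, expanding via Definition \ref{SStops} and the sum rule. For $\LS$, if $X$ has a Left option pick $X^{\rm L}$ with $\RS(X^{\rm L})=\LS(X)$; then $\Num K+X^{\rm L}$ is a Left option of $\Num K+X$ and the induction hypothesis gives $\RS(\Num K+X^{\rm L})\geqslant\RS(X^{\rm L})=\LS(X)$. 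If instead $X$ is Left-atomic with $\LS(X)=\ell$, then guaranteedness is exactly what is needed: every atom reachable below $\XR$ is $\geqslant\ell$, so every terminal score of $\Num K+X$ equals $0+(\text{an }X\text{-atom})\geqslant\ell$, whence $\LS(\Num K+X)\geqslant\ell$. For $\RS$ I must bound every Right option from below by $\RS(X)$: options $\Num K+X^{\rm R}$ are handled by the induction hypothesis for $\LS$, while for an option $\Num{K^{\rm R}}+X$ I use $K\geqslant 0$ to pick Left's maintaining response $K^{\rm RL}\geqslant 0$, so that $\LS(\Num{K^{\rm R}}+X)\geqslant\RS(\Num{K^{\rm RL}}+X)\geqslant\RS(X)$ by the induction hypothesis (legitimate because $K\geqslant 0$ forces every $K^{\rm R}$ to admit a Left reply, so $\Num{K^{\rm R}}+X$ is never Left-atomic). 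The two ingredients play complementary roles --- guaranteedness (the atom-wise condition $s\leqslant t$, not mere stability) drives the Left-atomic case of $\LS$, and $K\geqslant 0$ drives the $K^{\rm R}$-case of $\RS$ --- and this is precisely where the hot-atomic games excluded by Theorem \ref{thm:sbU} would break the argument. I expect the bookkeeping of the atomic base cases (e.g. $K$ a Normal-play end, or $X\in S_0$) and the verification that the relevant options exist to be the fiddly but routine core of the write-up.
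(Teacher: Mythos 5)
Your proposal is correct, but it takes a genuinely different route from the paper's. The paper proves order-preservation head-on: for arbitrary $G\geqslant H$ in $\mathbb{Np}$ it shows $\LS(\Num{G}+X)\geqslant\LS(\Num{H}+X)$ by induction on the birthdays of $G$, $H$ and $X$, with the key case (a Left move inside $\Num{H}$) resolved by the Normal-play dichotomy that Left's winning move in $G-H^{\rm L}$ is either some $G^{\rm L}-H^{\rm L}\geqslant 0$ or some $G-H^{\rm LR}\geqslant 0$; it then establishes order-reflection separately with the distinguishing game $\sim\!\Num{H}+\langle\,-1\mid 1\,\rangle$. You instead route everything through the single statement $K\geqslant 0\Leftrightarrow\Num{K}\geqslant 0$, using the facts that $\zeta$ is a sum- and conjugation-homomorphism, that $\geqslant$ is translation-invariant, and that consequently $\Num{H}+\sim\!\Num{H}=\Num{H-H}\equiv 0$. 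This algebraic scaffolding is all sound (each step only uses closure of $\GB$ under sum and conjugation and the form-level identity $\Num{G+H}=\Num{G}+\Num{H}$), and it buys two things: your ``hard'' induction is the $H=0$ specialization of the paper's, which is strictly simpler because the $\Num{H}$-component then has no moves and the $G-H^{\rm LR}$ reversibility case vanishes; and you obtain as a by-product that every embedded Normal-play game is invertible in $\GB$, a fact of independent interest that the paper's argument does not deliver. Your witness $\langle\,\emptyset^{-1}\mid\emptyset^{1}\,\rangle$ (a moveless guaranteed game scoring $-1$ against the stuck Left and $+1$ against the stuck Right) plays the same role as the paper's Zugzwang component $\langle\,-1\mid 1\,\rangle$ but is cleaner, since one need not argue that neither player will enter it. You correctly isolate the two load-bearing hypotheses --- guaranteedness for the Left-atomic case of $\LS$, and $K\geqslant 0$ supplying the maintaining reply $K^{\rm RL}\geqslant 0$ for the $\RS$ case --- so the remaining bookkeeping is, as you say, routine.
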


\begin{proof}
We must demonstrate that the Normal-play mapping is an order-embedding.
First we show that $\zeta$ is order-preserving.

Consider $G, H\in\mathbb{Np}$ such that $G\geqslant  H$.
We want to argue that $\Num{G}\geqslant  \Num{H}$ in $\GB$. The next arguments
 show that $\LS(\, \Num{G}+X)\geqslant  \LS(\, \Num{H}+X)$,
for all $X\in\GB$.
To this purpose, we induct on the sum of the birthdays of $G$, $H$ and $X$.

 First, suppose that Left has no options in $\Num{H}+X$. This occurs when
  $X=\langle \,\emptyset^x\mid\XR\, \rangle$ and
$H=\{\emptyset | \,\HR\}$
(that is $\Num{H}=\langle \,\emptyset^0\mid\Num{H}^{\cal R}\, \rangle$).
In this case, $\LS(\, \Num{H}+X)=x$. Now
$\Num{G}+X=\Num{G}+\langle \,\emptyset^x\mid\XR\, \rangle$.
If Left has a move in $G$ then, in $X$, which is a guaranteed game,
the score will be larger than or equal to $x$. If Left cannot play in $G$ then
the game is over and she has a score of $x$.
Therefore, in both cases, because the score in $\Num{G}$ is always
$0$, $\LS(\, \Num{G}+X)\geqslant  x$.

Now we assume that Left has a move in $\Num{H}+X$.
If there is a Left move, $X^{\rm L}$, such that
  $\LS(\, \Num{H}+X)=\RS(\, \Num{H}+X^{\rm L})$, then, by induction
  $\RS(\, \Num{H}+X^{\rm L})\leqslant  \RS(\, \Num{G}+X^{\rm L})$.
  By the definitions of the Left- and Right-scores,
  $\RS(\, \Num{G}+X^{\rm L})\leqslant  \LS(\, \Num{G}+X)$, giving
$\LS(\, \Num{H}+X) \leqslant  \LS(\, \Num{G}+X)$. The remaining case is that there is a Left move in $\Num{H}$ with
$\LS(\, \Num{H}+X)=\RS(\, \Num{H}^{\rm L}+X)$. In $\mathbb{Np}$,
$G\geqslant  H$, i.e., $G-H\geqslant  0$ and  so Left has a winning move
in $G-H^{\rm L}$. There are two possibilities, either $G^{\rm L}-H^{\rm L}\geqslant  0$ or
 $G-H^{\rm LR}\geqslant  0$. If the first occurs, then $G^{\rm L}\geqslant  H^{\rm L}$
and, by induction, $\RS(\, \Num{G}^{\rm L}+X)\geqslant  \RS(\, \Num{H}^{\rm L}+X)$
which gives us the inequalities
\[\LS(\, \Num{H}+X)=\RS(\, \Num{H}^{\rm L}+X)\leqslant
\RS(\, \Num{G}^{\rm L}+X)\leqslant \LS(\, \Num{G}+X).
\]
If $G-H^{\rm LR}\geqslant  0$ occurs, then, by induction,
$\LS(\, \Num{G}+X) \geqslant  \LS(\, \Num{H}^{\rm LR}+X)$. By the definitions of Left- and Right scores,
we also have $\LS(\, \Num{H}^{\rm LR}+X) \geqslant \RS(\, \Num{H}^{\rm L}+X)$ and
since we are assuming that $\LS(\, \Num{H}+X)=\RS(\, \Num{H}^{\rm L}+X)$,
we can conclude that $\LS(\, \Num{G}+X)\geqslant  \LS(\, \Num{H}+X)$.

The arguments to show that $\RS(\, \Num{G}+X)\geqslant  \RS(\, \Num{H}+X)$
for all $X\in\mathbb{\GB}$ are analogous and we omit these.
Since $\LS(\, \Num{G}+X)\geqslant  \LS(\, \Num{H}+X)$ and
$\RS(\, \Num{G}+X)\geqslant  \RS(\, \Num{H}+X)$ for all $X\in\mathbb{\GB}$, then, by Definition~\ref{def:equality},
$\Num{G}\geqslant  \Num{H}$.

To demonstrate that $\zeta$ is an order-embedding, it suffices to show that $G>H$ implies $\Num{G}>\Num{H}$ and
$G \mid\mid H$ implies $\Num{G} \mid\mid \Num{H}$.
We already know that $G>H$ implies $\Num{G}\geqslant \Num{H}$, so it suffices to show that $\Num{G}\neq \Num{H}$. Consider the distinguishing game $X=\; \sim\! \Num{H} +\langle \, -1\mid 1\, \rangle$. We get $\Num{H} + X=H+ \sim \!\! H +\langle \, -1\mid 1\, \rangle$, and Left (next player) loses playing first in $H-H$. So $\LS(\Num{H} + X)=-1$ (no player will play in the Zugzwang). But, similarly, $G>H$ implies $\LS(\Num{G} + X)=1$, since Left wins playing first in $G-H$. Hence $\Num{H}\ne \Num{G}$.

To prove that $H\mid\mid G$, we use the same distinguishing game $X=\; \sim \! \Num{H} +\langle \, -1\mid~1\, \rangle$.
We have that $-1=\LS(H+ X)<\LS(G+ X)=1$ and $1=\RS(H+ X) >\RS(G+ X)=-1$, which proves the claim.
\end{proof}

Some of the properties of Normal-play will hold in our Scoring universe,
 but there is some cost to play in a fairly general Scoring universe; there
are non-invertible elements (see also Section \ref{sec:survey}). Hence
comparison of games, in general, cannot be carried out as easily as in
 Normal-play, where $G\geqslant H$ is equivalent to $G-H\geqslant 0$ (Left win
playing second in $G-H$). Here we begin by demonstrating how to
compare games with scores.

Note that since $\Num{-n} =\, \sim\!\Num{n}$ we will revert to the less cumbersome
notation $-\Num{n}$.

    \begin{definition}
    Let $G$ be a game in $\mathbb{S}$. Then
    $\LSu(G)=\min\{\LS(G-\Num{n}\, ):n\in\mathbb{N}_0\}$ is the
    \emph{Right's pass-allowed Left-score} (\emph{pass-allowed Left-score}). The \emph{pass-allowed Right-score} is defined analogously,
    $\RSo(G)=\max\{\RS(G+\Num{n}):n\in\mathbb{N}_0\}$.
    \end{definition}
In brief, the `overline' indicates that Left can pass and the `underline' that Right can pass.
    \begin{lemma}
    Let $G\in \mathbb{S}$. Then
\begin{enumerate}
\item $\LS(G)\geqslant  \LSu(G)$ and $\RS(G)\leqslant  \RSo(G)$;
\item $\RSo(G+H)\geqslant \RSo(G)+\RSo(H)$
and
$\LSu(G+H)\leqslant \LSu(G)+\LSu(H)$.
\end{enumerate}
    \end{lemma}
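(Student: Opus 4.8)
The plan is to dispatch part (1) straight from the definitions and to reduce part (2) to a single superadditivity statement for the pass-allowed Right-score, obtaining its companion by conjugation. For part (1), note that $\Num{0}=\langle\,\emptyset^0\mid\emptyset^0\,\rangle=0$, so $G-\Num{0}=G=G+\Num{0}$. Thus $\LS(G)=\LS(G-\Num{0})$ is one of the terms in the minimum defining $\LSu(G)$, giving $\LSu(G)\leqslant\LS(G)$; dually $\RS(G)=\RS(G+\Num{0})$ is a term in the maximum defining $\RSo(G)$, giving $\RSo(G)\geqslant\RS(G)$. Both inequalities follow at once.

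For part (2) I would first record the symmetry that reduces the two inequalities to one. Conjugation distributes over disjunctive sums and swaps the players while negating atoms, so $\LS(\sim\!K)=-\RS(K)$, $\RS(\sim\!K)=-\LS(K)$, and, using $\sim\!\Num{n}=-\Num{n}$, a short computation yields $\LSu(G)=-\RSo(\sim\!G)$. Hence $\LSu(G+H)=-\RSo(\sim\!G+\sim\!H)$, and the target $\LSu(G+H)\leqslant\LSu(G)+\LSu(H)$ is exactly the conjugate of the superadditivity $\RSo(\sim\!G+\sim\!H)\geqslant\RSo(\sim\!G)+\RSo(\sim\!H)$. So it suffices to prove $\RSo(G+H)\geqslant\RSo(G)+\RSo(H)$ for all $G,H$.

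To prove this I would choose $m,p$ attaining the respective maxima, so $\RSo(G)=\RS(G+\Num{m})$ and $\RSo(H)=\RS(H+\Num{p})$, and set $A=G+\Num{m}$, $B=H+\Num{p}$. Since the waiting-move ladders concatenate, $\Num{m}+\Num{p}=\Num{m+p}$, whence $A+B=G+H+\Num{m+p}$; and because $m,p$ are maximising, $A$ and $B$ are \emph{pass-saturated}, i.e. $\RS(A+\Num{j})=\RS(G+\Num{m+j})\leqslant\RSo(G)=\RS(A)$ for all $j$, and likewise for $B$. I would then establish the additivity estimate $\RSo(A+B)\geqslant\RS(A)+\RS(B)$, from which $\RSo(G+H)=\max_n\RS(G+H+\Num{n})\geqslant\RSo(A+B)\geqslant\RS(A)+\RS(B)=\RSo(G)+\RSo(H)$. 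For the estimate I play $A+B+\Num{k}$ with Right to move and $k$ large, having Left use an \emph{imitation strategy}: maintain Right-first optimal plays in $A$ and in $B$ separately, reply to each Right move in the component in which Right moved, drawing a waiting move from the shared pool $\Num{k}$ whenever the per-component plan calls for a pass, and spending an extra pool move to stall whenever Left would otherwise be forced to act off-plan. For $k$ large Left never exhausts the pool, so she is never prematurely stuck; play then ends only when Right is stuck in both components, and the terminal score is the sum of the two component outcomes.

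The main obstacle is the bookkeeping at termination, caused by the two components being exhausted at different times. A component may reach, on Left's turn, a position with no Left move while the other component still offers Right a reply; there Left must stall with a pool move rather than let that component's atom be triggered, so the component is finally scored when Right is globally stuck — by its \emph{Right-facing} atom $t$ — rather than when Left first ran out, which would read its \emph{Left-facing} atom $s$. One must check that $t$ is still at least the value Left had secured there. This is exactly the delicate point: for stable (in particular guaranteed) games the atomic condition $s\leqslant t$ makes the Right-facing reading no worse, while pass-saturation of $A$ and $B$ forces the per-component optimal lines to settle on Right-stuck terminals, so the value read off the combined game at the end matches what Left guaranteed in each component. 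Verifying that the shared pool always covers the simultaneous demands of both components, that the induced per-component lines are legitimate alternating lines for the separate optimal strategies, and that the non-stable atomic followers do not cause an undershoot, are the remaining points I would have to treat with care.
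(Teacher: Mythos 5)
Your part (1) and the conjugation reduction in part (2) are correct and coincide with the paper's treatment: the paper disposes of (1) with the same one-line observation that $G+\Num{0}=G$ contributes one term to the $\min$/$\max$, handles the $\LSu$ inequality "similarly'', and proves the superadditivity of $\RSo$ by exactly your imitation strategy, stated there in a single sentence ("if Left answers Right in the same component, including the possibility of a waiting-move, this is not the full complement of strategies available to her'') with none of the termination bookkeeping you describe.

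The point you flag as delicate is not merely delicate --- it is fatal at the stated level of generality, so the verification you defer cannot be carried out over all of $\mathbb{S}$. Take $A=\langle\,\emptyset^{1}\mid\emptyset^{-1}\,\rangle$, $G=\langle\, A\mid A\,\rangle$ and $H=\Num{1}$. Then $\RS(G+\Num{0})=\LS(A)=1$ while $\RS(G+\Num{n})=-1$ for $n\geqslant 1$ (a waiting move only forces Left off the atom $\emptyset^{1}$), so $\RSo(G)=1$; and $\RSo(H)=0$ since $H+\Num{n}$ is Right-atomic with atom sum $0$. But in $G+H+\Num{n}$ Right's forced move lands on $A+H+\Num{n}$, where Left is no longer stuck: every Left option is a Right-atomic position whose atoms sum to $-1$, so $\RS(G+H+\Num{n})=-1$ for every $n$ and $\RSo(G+H)=-1<1=\RSo(G)+\RSo(H)$. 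This is precisely your scenario of a component being read off its Right-facing atom $t=-1$ instead of the Left-facing atom $s=1$ that Left had secured in isolation. So the lemma as stated for $\mathbb{S}$ is false, and the paper's one-sentence proof inherits the same gap; the statement needs the guaranteed hypothesis (or at least hereditary stability), under which your observation that $s\leqslant t$ closes the hole, and $\GB$ is the only setting in which the paper subsequently uses the result. Your write-up is therefore not yet a proof --- the termination analysis you postpone is the entire mathematical content --- but your diagnosis of where the difficulty sits and of which hypothesis repairs it is exactly right, and sharper than the source.
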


    \begin{proof}
    The inequalities in statement 1 are obvious from the definition of the $\min$-function,
and since $G+\Num{0} =G$.

If Left answers Right in the same component (including the possibility of a waiting-move) then this is not the full complement
 of strategies available to her, and this proves that
$ \RSo(G+H)\geqslant \RSo(G)+\RSo(H)$.

The inequalities for $\LSu$ are proved similarly.
    \end{proof}

    \begin{definition}
    Let $\ell \in \mathbb{R}$.
    Then, $G$  is \textit{left-$\ell$-protected} if
     $\LSu(G)\geqslant  \ell$ and, for all
     $G^{\rm R}$, there exists $G^{\rm RL}$
     such that $G^{\rm RL}$ is left-$\ell$-protected. Similarly,
$G$ is Right-$r$-protected if
     $\RSo(G)\geqslant  r$ and, for all
     $G^{\rm L}$, there exists $G^{\rm LR}$
     such that $G^{\rm LR}$ is right-$r$-protected.
    \end{definition}

  The concept of $\ell$-protection allows for comparisons with numbers in $\GB$.

\begin{theorem}\label{thm:comp}
Let $G\in \GB$. Then $G\geqslant  \ell$ if and only if $G$ is left-$\ell$-protected.
\end{theorem}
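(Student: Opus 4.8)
The plan is to reduce the comparison $G\geqslant\ell$ to a pair of score inequalities and then prove each implication by induction on birthdays. Since $\ell=\langle\,\emptyset^\ell\mid\emptyset^\ell\,\rangle$ is a number, adding it to any game merely shifts every terminal score by $\ell$, so $\LS(\ell+X)=\ell+\LS(X)$ and $\RS(\ell+X)=\ell+\RS(X)$ for every $X$. Thus, by Definition~\ref{def:equality}, $G\geqslant\ell$ is equivalent to the statement that $\LS(G+X)\geqslant\ell+\LS(X)$ and $\RS(G+X)\geqslant\ell+\RS(X)$ hold for all $X\in\GB$. The theorem then asserts that this pair of inequalities holds for all $X$ exactly when $G$ is left-$\ell$-protected.

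For the direction ``protected $\Rightarrow$ inequalities'', I would prove both inequalities simultaneously by induction on the sum of the birthdays of $G$ and $X$. For $\RS(G+X)\geqslant\ell+\RS(X)$ (Right to move) I bound every Right option. If Right plays in $X$, to $G+X^{\rm R}$, then by induction $\LS(G+X^{\rm R})\geqslant\ell+\LS(X^{\rm R})\geqslant\ell+\RS(X)$, the last step because $\RS(X)=\min_{X^{\rm R}}\LS(X^{\rm R})$. If Right plays in $G$, to $G^{\rm R}+X$, the protection clause supplies a left-$\ell$-protected $G^{\rm RL}$; Left answers there, and induction gives $\LS(G^{\rm R}+X)\geqslant\RS(G^{\rm RL}+X)\geqslant\ell+\RS(X)$. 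The base case, $G$ and $X$ both Right-atomic, reduces to $r_G\geqslant\ell$, which holds because $r_G=\RS(G)\geqslant\LS(G)\geqslant\LSu(G)\geqslant\ell$, using the preceding Lemma (part 1) together with the fact that a guaranteed game is stable, so $\LS(G)\leqslant\RS(G)$. The inequality $\LS(G+X)\geqslant\ell+\LS(X)$ (Left to move) is dual: when $X$ is not Left-atomic Left copies her optimal move $X^{\rm L}$ realising $\LS(X)$ and applies induction, and when both $G$ and $X$ are Left-atomic the base case is $\ell_G=\LSu(G)\geqslant\ell$.

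The one case that does not follow by this mirroring is when $X$ is Left-atomic while $G$ still has Left options, so that Left is forced to move inside $G$ with no tempo available in $X$. This is where I expect the main difficulty, and where the guaranteed hypothesis is indispensable. The key observation is that, because $X$ is Left-atomic and guaranteed, every atomic outcome reachable in $X$ is at least $\ell_X=\LS(X)$, so each of Right's subsequent moves in $X$ behaves exactly like one of the waiting-moves in $-\Num{n}$ against the $G$-component: it neither lowers the eventual $X$-contribution below $\ell_X$ nor advances $G$. I would make this precise by a subsidiary induction establishing a domination $\LS(G+X)\geqslant\LS(G-\Num{m})+\ell_X$, where $m$ bounds the number of consecutive Right moves available in $X$; since $\LS(G-\Num{m})\geqslant\LSu(G)\geqslant\ell$, this yields $\LS(G+X)\geqslant\ell+\ell_X$ and closes the case.

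For the converse, ``inequalities $\Rightarrow$ protected'', the condition $\LSu(G)\geqslant\ell$ is immediate: testing against $X=-\Num{n}$ gives $\LS(G-\Num{n})\geqslant\ell+\LS(-\Num{n})=\ell$ for every $n$, whence $\LSu(G)=\min_n\LS(G-\Num{n})\geqslant\ell$. For the recursive clause I must produce, for each $G^{\rm R}$, a left-$\ell$-protected $G^{\rm RL}$; by the induction hypothesis applied to the smaller game $G^{\rm RL}$ it suffices to find one Left option with $G^{\rm RL}\geqslant\ell$. I would obtain it by contradiction: if every $G^{\rm RL}$ failed $\geqslant\ell$, each would admit a distinguishing game, and I would assemble these into a single adversary $X^\ast\in\GB$ --- built in the spirit of the distinguishing game $X=\;\sim\!\Num{H}+\langle\,-1\mid1\,\rangle$ used in the proof of Theorem~\ref{thm:emb} --- that forces Right's first move onto $G^{\rm R}$ and penalises all of Left's replies outside $G$, so that $\RS(G+X^\ast)<\ell+\RS(X^\ast)$, contradicting $G\geqslant\ell$. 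Verifying that this combined witness stays inside $\GB$ and realises the claimed Right-score is the delicate point of this direction, although it is secondary to the waiting-move simulation above, which is the genuine crux of the proof.
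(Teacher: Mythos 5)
Your overall architecture coincides with the paper's: reduce $G\geqslant\ell$ to the two score inequalities via number translation, prove the ``protected $\Rightarrow$ $G\geqslant\ell$'' direction by induction on birthdays, and prove the converse by exhibiting distinguishing games. Your treatment of the ($\Leftarrow$) direction is essentially the paper's proof: the Right-to-move analysis (answering $G^{\rm R}+X$ with the protected $G^{\rm RL}$) is identical, and your ``waiting-move simulation'' for the case where $X$ is Left-atomic --- restricting Left to play in $G$, treating Right's moves in $X$ as passes, and invoking $\LSu(G)\geqslant\ell$ together with guaranteedness of $X$ to bound the $X$-contribution below by its Left atom --- is exactly the paper's strategic argument, just phrased as a domination $\LS(G+X)\geqslant\LS(G-\Num{m})+\LS(X)$. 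You correctly identified this as the place where the guaranteed hypothesis does its work.

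The ($\Rightarrow$) direction, however, has a genuine gap. Protection can fail in three ways: $\LSu(G)<\ell$; some $G^{\rm R}$ has $\mathcal{G}^{\rm RL}\neq\emptyset$ but no left-$\ell$-protected (equivalently, by induction, no $\geqslant\ell$) Left option; or some $G^{\rm R}$ is \emph{Left-atomic}, so that there is no $G^{\rm RL}$ at all. You handle the first two, but your plan for the second --- assemble the distinguishing games of the $G^{\rm RL}_i$ --- produces nothing in the third case, since there are no witnesses to assemble. That case needs a separate construction: if $G^{\rm R}=\langle\,\emptyset^v\mid\cdots\,\rangle$, take $X=\langle\,\emptyset^a\mid b\,\rangle$ with $a\leqslant b$ (to stay in $\GB$), $a<-v$ and $b>-\ell$; then Right moves to $G^{\rm R}+X$, Left is stranded with score $v+a<0<\ell+b=\RS(\ell+X)$. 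This is the paper's Case~2 and is precisely where the exclusion of hot atomic games matters. Secondly, your template for the combined adversary, $\sim\!\Num{H}+\langle\,-1\mid1\,\rangle$, is the wrong shape: for the induction on the recursive clause to close, all witnesses must be drawn from the uniform Left-atomic family $X_i=\langle\,\emptyset^{a_i}\mid b_i-\Num{n_i}\,\rangle$, merged as $X=\langle\,\emptyset^{\min a_i}\mid\min b_i-\Num{\max n_i}\,\rangle$; this merge is legitimate because decreasing $a$, decreasing $b$ and increasing $n$ each only favour Right (the last by Theorem~\ref{thm:emb}), so $X$ simultaneously defeats every $G^{\rm RL}_i$. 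Fixing the witness class in advance so that it is closed under this merge is not a peripheral verification but the load-bearing part of this direction. (Your derivation of $\LSu(G)\geqslant\ell$ from $X=-\Num{n}$ is fine, and is cleaner than the paper's Case~1.)
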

\begin{proof}

\smallskip
($\Rightarrow$) Suppose that $G$ is not left-$\ell$-protected.

We will use distinguishing games of the form
$X=\langle \,  \emptyset^a\mid b-  \Num{n}  \, \rangle$
to obtain contradictory inequalities $\RS(G+X)<0<\RS(\ell+X)$.
The cases $1$ and $2$ are general considerations that don't need induction and that can be
used whenever we want. We begin with the base case of the induction on the rank of $G$,
that will be used to prove case $3$.\\

\noindent Base case (rank 0): $G=\langle \, \emptyset^v\mid \emptyset^s\, \rangle$, with $v\leqslant s$.\\

Because $G$ is not left-$\ell$-protected, we have $\LS (G) = v < \ell$. Therefore, in order to have a
distinguishing game to build the induction, we consider
$X=\langle \,  \emptyset^a\mid  a+\Num{0}  \, \rangle$ where $-\ell<a<-v$. Then, $\RS(G+X)=v+a<0<a+\ell=\RS(r+X)$.\\

\noindent {\rm Case 1:} $\LSu(G)=v<\ell$.\\

Consider $X=\langle \,  \emptyset^a\mid a-   \Num{n}  \, \rangle$,
where $-\ell<a<-v$, and where $n$ is large enough to obtain $\LSu(G)$. Then,
$$\RS(G+X)\leqslant v+a<0$$
$$\RS(\ell+X)=\ell+a>0$$

This is contradictory with $G\geqslant \ell$.\\

\noindent Case 2: There exists a $G^{\rm R}$ that is Left-atomic.\\

If $G^{\rm R}=\langle \,  \emptyset^v \mid (G^{\rm R})^\mathcal{R}  \, \rangle$,
then consider $X=\langle \,  \emptyset^a\mid b+\Num{0}  \, \rangle$ such that
$a<-\ell$, $b>-\ell$ and $a<b$. Then,
$$\RS(G+X)\leqslant \LS(G^{\rm R}+X)=v+a<0$$
$$\RS(\ell+X)=\ell+b>0.$$

\noindent Case 3: There exists $G^{\rm R}$ such that
${\cal G}^{\rm RL}\neq\emptyset$ and all $G^{\rm RL}$ are not left-$\ell$-protected.\\

Consider $G^{\rm RL_1}$, $G^{\rm RL_2}$, $G^{\rm RL_3}$,\ldots,$G^{\rm RL_k}$;
the games in ${\cal G}^{\rm RL}$. By induction, for all $i\in \{1,\ldots k\}$,
consider the distinguishing games
  $X_i = \langle \,  \emptyset^{a_i}\mid b_i- \Num{n_i}\, \rangle$
such that
  $$\RS(G^{\rm RL}_i+X_i)<0<\RS(\ell+X_i),$$ for all $i$.

Let $X=\langle \,   \emptyset^{  \min(a_i)}\mid \min(b_i)- \max(\Num{n_i})  \, \rangle$.
By convention and Theorem~\ref{thm:emb}, each $X_i$ is no better than $X$ for Right.
Hence, we get that $\RS(G^{\rm RL}_i+X)<0$, for all $i$. We also get $\RS(\ell+X)=\ell+\min(b_i)>0$.

Therefore, $\RS(G+X)\leqslant \LS(G^{\rm R}+X)=\max_i \RS(G^{\rm RL}_i+X) <0$.
This contradicts $G\geqslant \ell$, since $\RS(\ell+X)>0$, and thus finishes the induction step.\\

\smallskip
($\Leftarrow$) Assume that $G$ is left-$\ell$-protected.
We need to prove that $\RS(G+X)\geqslant  \RS(\ell+X)$ and $\LS(G+X)\geqslant
\LS(\ell+X)$ $\forall X\in\GB$. Fix some $X\in\GB$
and we proceed by induction on the sum of the ranks of $G$ and $X$.

Suppose there is no Right option in $G+X$, that is,
$G=\langle \GL  \mid\emptyset^p \rangle $ and $X=\langle \XL \mid\emptyset^q \rangle$.
It follows that $\RS(G+X) = p+q$ and $\RS(\ell+X) = \ell+q$.
Let $s=\LSu(G)$. Since $G$ is left-$\ell$-protected, we get $s\geqslant  \ell$. Further, since $G$ is
guaranteed, we have that $s\leqslant p$, and therefore
$\RS(G+X) = p+q \geqslant  s+q \geqslant  \ell+q =\RS(\ell+X)$.

We may now suppose that Right has a move in $G+X$.
Suppose that Right's best move is in $X$, to say $G+X^{\rm R}$. We then have the chain of relations
\begin{eqnarray*}
\RS(G+X) &=& \LS(G+X^{\rm R}), \mbox{ by assumption},\\
&\geqslant & \LS(\ell+X^{\rm R}), \mbox{ by induction on the sum of the ranks},\\
&= & \RS(\ell+X), \mbox{ by definition and since numbers have empty sets of options.}
\end{eqnarray*}
If Right's best move is in $G$, to say $G^{\rm R}+X$, then we have the relations
\begin{eqnarray*}
\RS(G+X) &=& \LS(G^{\rm R}+X), \mbox{ by assumption,}\\
&\geqslant & \RS(G^{\rm RL}+X), \mbox{ Left might have chosen a non-optimal option,}\\
&\geqslant & \RS(\ell+X), \mbox{ by induction, since $G^{\rm RL}$ is left-$\ell$-protected.}
\end{eqnarray*}

In all cases we have that $\RS(G+X)\geqslant  \RS(\ell+X)$.

\smallskip
If Left has a move in $X$ then let $X^{\rm L}$ be the move such that $\LS(\ell+X)=\RS(\ell+X^{\rm L})$.
By the previous paragraphs, we have that $\RS(\ell+X^{\rm L})\leqslant  \RS(G+X^{\rm L})$.
Since moving to $X^{\rm L}$ may not be the best Left move in $G+X$,
we know that $\RS(G+X^{\rm L})\leqslant  \LS(G+X)$,
i.e. $\LS(\ell+X)\leqslant  \LS(G+X)$.

Now, if Left has no move in $X$ then $X = \langle \,  \emptyset^s|\XR \, \rangle$.
We know that $\LS(\ell+X) = \ell + s$. In $G+X$ we restrict Left's strategy, which can only give her the same or a
lower score. We denote the continuation of the two games as the $G$ and $X$ components.
Whenever there is a Left option in the $G$ component, Left plays the best one.
If there is no option in the $G$ component, then and only then Left plays in the $X$ component. Any move by Right in the
$X$ component is a waiting-move in the $G$ component so
Left achieves a score of at least $\LSu(G)$; moreover, by assumption,
$\LSu(G)\geqslant  \ell$. Since $X$ is guaranteed, the score from this component
is at least $s$. That is, $\LS(G+X)\geqslant \ell+s = \LS(\ell+X)$.

For all $X$, we have shown that $\LS(G+X)\geqslant  \LS(\ell+X)$
and $\RS(G+X)\geqslant  \RS(\ell+X),$ that is, $G\geqslant  \ell$.
\end{proof}

An interesting question  is: \textit{What games, and how many, are equal to 0?}
We have, for example, $G=\langle \,   \langle \,   1\mid 0  \, \rangle
 \mid  \langle \,   0\mid -1  \, \rangle   \, \rangle  = 0$, a game not obtained from the natural embedding.
But each natural embedding of Normal-play game equal to zero maps to zero in $\GB$.
Note that, for example, $\langle \,   0\mid 0  \, \rangle \ne 0$, which is of course what we want
since in $\mathbb{Np}$, $\cgstar = \{  0\mid 0  \}$ (compare this with Milnor's scoring universe
in Section \ref{sec:survey}). But also
$ \langle \,    \langle \,   1\mid 0  \, \rangle   \mid \emptyset^0   \, \rangle = 0$,
a game that is not obtained from a 0 in Normal-play and neither is it a dicot.

The size of the equivalence class of $0$ gives a lower bound on the sizes of the other equivalence classes
since $G+H=H$ if $G=0$.
Further, note that, if $G\geqslant 0$, then $G$ is Left-0-protected, and thus Right loses moving first, which is similar to the situation in Normal-play.
The following corollary of Theorem \ref{thm:comp} gives a criteria that enables us to determine when a game is equal to 0.

\begin{corollary}\label{0corollary}
Let $G\in \GB$. Then $G=0$ iff G is left-$0$- and right-$0$-protected,
that is iff $\LSu(G) = \RSo(G)= 0$ and, for all $G^{\rm R}\in \GR $, there exists
$G^{\rm RL}\geqslant  0$ and, for all $G^{\rm L}\in \GL $, there exists
$G^{\rm LR}\leqslant  0$.
\end{corollary}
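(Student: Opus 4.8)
The plan is to derive the corollary directly from Theorem~\ref{thm:comp} by applying it to both $G$ and its conjugate. Recall that by definition $G=0$ means $G\geqslant 0$ and $G\leqslant 0$ simultaneously. The first inequality, $G\geqslant 0$, is handled by Theorem~\ref{thm:comp} with $\ell=0$: it holds if and only if $G$ is left-$0$-protected, i.e.\ $\LSu(G)\geqslant 0$ and, for every $G^{\rm R}$, some $G^{\rm RL}$ is left-$0$-protected (equivalently, by Theorem~\ref{thm:comp} again, $G^{\rm RL}\geqslant 0$). So the forward direction of Theorem~\ref{thm:comp} immediately supplies half of what is claimed, and I must only translate the recursive ``left-$0$-protected'' condition into the statement about the existence of $G^{\rm RL}\geqslant 0$.

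For the symmetric inequality $G\leqslant 0$, the natural move is to pass to conjugates. I would invoke the dual form of Theorem~\ref{thm:comp}: $G\leqslant \ell$ if and only if $G$ is right-$\ell$-protected; with $\ell=0$ this gives $\RSo(G)\geqslant 0$ together with the requirement that for every $G^{\rm L}$ some $G^{\rm LR}$ is right-$0$-protected, i.e.\ $G^{\rm LR}\leqslant 0$. The right-protected version of the theorem is obtained from the stated (left) version by applying it to $\sim\!G$, using that conjugation reverses the order (so $G\leqslant 0$ iff $\sim\!G\geqslant 0$) and that it interchanges $\LSu$ with $-\RSo$ and left-options with right-options; since $\GB$ is closed under conjugation this is legitimate. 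Combining the two halves, $G=0$ holds exactly when $G$ is both left-$0$- and right-$0$-protected.

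It then remains to rewrite the pair of protection conditions in the unpacked numerical form stated in the corollary. From left-$0$-protection we get $\LSu(G)\geqslant 0$ and from right-$0$-protection $\RSo(G)\geqslant 0$; but Lemma~1 gives $\LSu(G)\leqslant \LS(G)\leqslant \RS(G)\leqslant \RSo(G)$ only in the stable case, so I would instead observe that $G\geqslant 0$ forces $\RSo(0)\geqslant\RSo(G)$-type comparisons to pin the stops. More directly, $G=0$ means $\LS(G+X)=\LS(X)$ and $\RS(G+X)=\RS(X)$ for all $X$; taking $X=-\Num{n}$ and $X=\Num{n}$ and passing to the extreme $n$ yields $\LSu(G)=\LSu(0)=0$ and $\RSo(G)=\RSo(0)=0$, which sharpens both inequalities to the claimed equalities $\LSu(G)=\RSo(G)=0$.

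The main obstacle I anticipate is the bookkeeping in the conjugation step: I must verify carefully that conjugation sends left-$\ell$-protected games to right-$(-\ell)$-protected games and that the recursion on options transforms as expected ($G^{\rm R}\mapsto(\sim\!G)^{\rm L}$ and so on), so that the dual of Theorem~\ref{thm:comp} is genuinely licensed rather than merely plausible. Once that symmetry is nailed down, the rest is a routine unwinding of definitions together with the clamping argument that upgrades the two one-sided inequalities on $\LSu(G)$ and $\RSo(G)$ to equalities.
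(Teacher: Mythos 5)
Your proposal is correct and follows exactly the route the paper intends: the corollary is obtained by applying Theorem~\ref{thm:comp} with $\ell=0$ to get $G\geqslant 0$, and its conjugate/dual form to get $G\leqslant 0$, then unpacking the two protection conditions (the paper offers no further proof, treating this as immediate). Your extra care in sharpening the one-sided bounds on $\LSu(G)$ and $\RSo(G)$ to the stated equalities via $\LS(G+X)=\LS(0+X)$ with $X=\pm\Num{n}$ is a worthwhile detail that the paper glosses over, and your sign-tracking through conjugation is the right way to license the dual theorem.
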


%

\section{Survey of other scoring universes}\label{sec:survey}
Let us begin by listing some of the game properties of the
other scoring universes in the literature. In the
columns, we list the authors of the three most relevant scoring universes in
relation to this work; the properties are discussed, as appropriate, later
in this section. The `Yes' in Stewart's column means that
there are no known `practical methods', or that the answer is trivial as for
the invertible elements. The games in Milnor's and Ettinger's universes are
trivially stable, since the only atomic games are numbers.
\medskip
Summarizing:

\begin{center}
\begin{tabular}{|c|c|c|c|}
\hline
{\bf Properties}& {\bf Milnor}&{\bf Ettinger}&{\bf Stewart}\\
\hline Ordered Abelian Group&Yes&No&No\\
 \hline Equivalence Class of 0  & Large & Large & Small\\
\hline  Invertible Elements &All& Many&Numbers \\
\hline Constructive Comparisons & Yes&Yes&No \\
\hline   Greediness Principle & Yes&Yes&No \\
 \hline Game Reductions & Yes &Yes& `Yes'\\
\hline   Characterization of Invertible Elements & Yes&Yes&`Yes'\\
\hline   Unique Canonical Forms & Yes&No&`Yes' \\
\hline  All Games Stable & `Yes' &`Yes' &No\\
\hline  Natural Embedding&No&No&No\\
\hline
\end{tabular}
\end{center}

\subsection{Milnor's non-negative incentive games }

Milnor \cite{Milno1953} and Hannor \cite{Hanne1959} considered dicot Scoring games in which
there is a non-negative incentive for each player to move,
and where the atoms (base of recursion) are real numbers.
We denote this universe by $\mathbb{PS}$. Non-negative incentive translates to $\LS(G) \geqslant \RS(G)$, for all positions, that is
\emph{Zugzwang} situations never occur and the universe is an abelian group.
As soon as $\LS(G)=\RS(G)$, the game is over and the players add up the score.

There is a similarity between these games
and Normal-play games. If the scores are always integers, at the end, the players
could count the score by imagining they are making `score'-many independent moves.
Using this idea, there have been many advances in the endgame of
\textsc{go} (last point). Games such as \textsc{amazons}\footnote{see
\url{http://en.wikipedia.org/wiki/Game_of_the_Amazons}}
and \textsc{domineering} can also be thought of as `territorial games' (where the score depends on the size of captured land),
but, in the literature, they have so far been analyzed under the guise of Normal-play.

Because the games in this universe have non-negative incentive
and all games of $\mathbb{Np}$ would have 0 incentive (no change in score)
there is no natural embedding of $\mathbb{Np}$ into $\mathbb{PS}$.
In Milnor's universe when we have the disjunctive sum of a Normal-play component with a non-negative incentive component,
the outcome is determined by the non-negative incentive component. Both players want to play in the non-negative incentive component because there are no Zugzwangs.
A Normal-play component alone is a tie. So, any Normal-play component is irrelevant; i.e.,
all embedded Normal-play components are equal to zero, and so, $\mathbb{PS}$ is not natural.

\subsection{Ettinger's dicot Scoring games}

Ettinger's universe \cite{Ettin1996,Ettin2000} also consists of dicot games, but
\emph{Zugzwang} games like $\langle \, -3\mid3\, \rangle $ are now allowed.
In Ettinger's universe,  $\mathbb{DS}$, the atomic games are the real numbers
 and moreover, there are no games of the form $\langle \,  \emptyset^r\mid \emptyset^s  \, \rangle, r\neq s$ (although the latter game is a dicot).
 He noted that real numbers are not necessary, just some values taken from an ordered abelian group.

The definitions of (in)equality and of Left- and Right-scores are as in Definitions~\ref{SStops} and \ref{def:equality}.

\begin{definition}
The disjunctive sum $G+H$ with $G,H\in \mathbb{DS}$ reduces to
\begin{displaymath}
G+H= \left\{ \begin{array}{ll}
r+s & \textrm{if $G=r$ and $H=s$ are numbers},\\
\langle \,   \GL +H,G+\HL\mid \GL +H, G+\HR  \, \rangle & \textrm{otherwise.}
\end{array} \right.
\end{displaymath}
\end{definition}

Disjunctive sum is associative and commutative, and the universe is partially ordered.
The problem is that games do not necessarily have inverses.
    The following concept was used by Ettinger \cite[p. 20-22]{Ettin1996}.

    \begin{definition}
    Let $r\in \mathbb{R}$. Then, $G$  is \textit{left-$r$-safe}
    if $\LS(G)\geqslant  r$ and, for all $G^{\rm R}$, there exists $G^{\rm RL}$ such that
    $G^{\rm RL}$ is left-$r$-safe. The concept of right-$r$-safety is defined analogously.
    \end{definition}

Two important results are:

\begin{theorem}[Ettinger's Theorem]\label{Ettingertheorem}
Let $r\in \mathbb{R}$ and $G\in \mathbb{DS}$.
Then, $G\geqslant  r$ iff $G$ is left-$r$-safe.
\end{theorem}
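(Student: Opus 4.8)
The plan is to transcribe the proof of Theorem~\ref{thm:comp}, observing that left-$r$-safety is exactly the dicot incarnation of left-$\ell$-protection: since every game of $\mathbb{DS}$ is a dicot there are no waiting-moves, so the plain Left-score $\LS$ takes over the role that $\LSu$ had before. I would prove the two implications separately—sufficiency by a direct induction, and necessity by contraposition using explicit distinguishing games drawn from $\mathbb{DS}$.

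For the ($\Leftarrow$) direction, assume $G$ is left-$r$-safe and induct on the sum of the ranks of $G$ and $X$, proving $\RS(G+X)\geqslant\RS(r+X)$ first and deducing the $\LS$-inequality afterwards. If Right cannot move in $G+X$, the dicot condition forces both $G$ and $X$ to be numbers, and $\LS(G)\geqslant r$ settles the claim. If Right's best move lies in $X$, then $\RS(G+X)=\LS(G+X^{\rm R})\geqslant\LS(r+X^{\rm R})=\RS(r+X)$ by induction; if it lies in $G$, left-$r$-safety provides a response $G^{\rm RL}$ with $\RS(G+X)=\LS(G^{\rm R}+X)\geqslant\RS(G^{\rm RL}+X)\geqslant\RS(r+X)$. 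The $\LS$-inequality then follows from the $\RS$-inequality exactly as in Theorem~\ref{thm:comp}, and its one delicate case—``Left has no move in $X$''—now trivializes, because a dicot with no Left move is a number, whence $\LS(G+X)=\LS(G)+\LS(X)\geqslant r+\LS(X)=\LS(r+X)$.

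For the ($\Rightarrow$) direction I argue the contrapositive: if $G$ is not left-$r$-safe I exhibit $X\in\mathbb{DS}$ giving a strict reversal of one of the defining inequalities of $G\geqslant r$. The failure splits into the same three cases as before. If $\LS(G)<r$, already $X=0$ yields $\LS(G+X)=\LS(G)<r=\LS(r+X)$. If some Right option $G^{\rm R}=c$ is a number—so $G^{\rm R}$ has no Left reply—then a \emph{Zugzwang} probe $X=\langle\, a\mid b\,\rangle$ with $a$ small and $b$ large (for instance $a=-c-1$, $b=-r+1$) gives $\RS(G+X)\leqslant\LS(c+X)=c+a<0<r+b=\RS(r+X)$; this is the dicot substitute for the Left-atomic probe of Theorem~\ref{thm:comp}. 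Finally, if some $G^{\rm R}$ has $\mathcal{G}^{\rm RL}\neq\emptyset$ with every $G^{\rm RL}_i$ not left-$r$-safe, I take the inductively supplied distinguishing games $X_i$ and assemble a single dicot $X$ against which Right first plays to $G^{\rm R}+X$ and then refutes each Left continuation.

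I expect this last case to be the crux. In the guaranteed setting of Theorem~\ref{thm:comp} one could make the combined probe Left-atomic, forcing Left to answer inside $G^{\rm R}$ so that $\LS(G^{\rm R}+X)=\max_i\RS(G^{\rm RL}_i+X)$; no non-numeric dicot is Left-atomic, so this trick is unavailable in $\mathbb{DS}$. The remedy I would pursue is to give the combined $X$ a single, very negative, numeric Left option, so that any Left move inside $X$ is self-defeating and Left is again effectively compelled to play in $G^{\rm R}$, while the Right part of $X$ is built recursively from the right parts of the $X_i$ so as to dominate every $X_i$ from Right's viewpoint. Verifying, with ordinary scores only, that this single $X$ simultaneously forces $\RS(G^{\rm RL}_i+X)<0$ for all $i$ and keeps $\RS(r+X)>0$—and in particular pinning down the parameters of the negative Left option and of the combined Right part—is the technical heart of the argument.
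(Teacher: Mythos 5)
The paper does not actually prove this statement: it is quoted from Ettinger \cite{Ettin1996,Ettin2000} without proof, so the only thing to compare against is the paper's own analogue, Theorem~\ref{thm:comp}, which is exactly the template you chose. Your ($\Leftarrow$) direction is a correct specialization: the dicot condition does turn every optionless position into a number, so the delicate ``Left has no move in $X$'' case of Theorem~\ref{thm:comp} collapses as you say, and the rest of the induction goes through verbatim with $\LS$ in place of $\LSu$.

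Two points in the ($\Rightarrow$) direction need attention. First, a concrete inconsistency: your case $\LS(G)<r$ uses $X=0$ and produces a violated $\LS$-inequality, but your case (c) needs the inductively supplied $X_i$ to satisfy $\RS(G^{\rm RL}_i+X_i)<0<\RS(r+X_i)$ and to have a common parametrized form. A probe $X_i=0$ witnessing only an $\LS$-failure cannot be fed into the combination step, so the base case must instead produce an $\RS$-violating probe, e.g.\ $X=\langle\, -M\mid b\,\rangle$ with $-r<b<-\LS(G)$, giving $\RS(G+X)\leqslant\LS(G)+b<0<r+b=\RS(r+X)$. Second, the step you flag as the ``technical heart'' is genuinely the crux but closes more easily than you fear, and your instinct to build the Right part of $X$ recursively (mimicking $b-\Num{n}$) is the wrong turn: the pass component $\Num{n}$ in Theorem~\ref{thm:comp} exists only because protection is defined via the pass-allowed score $\LSu$, whereas safety uses the plain $\LS$, so no waiting-moves are needed. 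One can carry through the whole induction with probes of the fixed shape $\langle\, -M\mid b\,\rangle$ where $b$ is a \emph{number}: the invariant is that for each non-safe $H$ there are $\beta_H>-r$ and $\mu_H$ such that every $b\in(-r,\beta_H)$ and $M\geqslant\mu_H$ give $\RS(H+\langle\, -M\mid b\,\rangle)<0<\RS(r+\langle\, -M\mid b\,\rangle)$. Then the combination step is just $b<\min_i\beta_{H_i}$ and $M>\max(\max_i\mu_{H_i},\RS(G^{\rm R}))$; the huge negative Left option $-M$ kills Left's escape $G^{\rm R}-M$, each $\RS(G^{\rm RL}_i+X)<0$ holds by the parametrized hypothesis with no appeal to any domination lemma or order-embedding (which, as the paper notes, is unavailable in $\mathbb{DS}$), and $\RS(r+X)=r+b>0$ survives because a minimum of finitely many numbers exceeding $-r$ still exceeds $-r$. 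With those two repairs your outline becomes a complete proof.
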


Writing the explicit condition for `safety', we get:

\begin{corollary}[Ettinger's Corollary]\label{Ettingercorollary}
Let $G\in \mathbb{DS}$. Then $G=0$ iff $\LS(G)=\RS(G)=0$ and for all
$G^{\rm R}\in \GR $ there exists $G^{\rm RL}\geqslant  0$ and for all
$G^{\rm L}\in \GL $ there exists $G^{\rm LR}\leqslant  0$.
\end{corollary}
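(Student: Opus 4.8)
The plan is to deduce the Corollary from Ettinger's Theorem (Theorem~\ref{Ettingertheorem}) in exactly the way Corollary~\ref{0corollary} is deduced from Theorem~\ref{thm:comp}: split the equality $G=0$ into the two inequalities $G\geqslant 0$ and $G\leqslant 0$, use the Theorem to convert each into a safety condition, and then unfold safety one level at the top.

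First I would record the conjugate form of Ettinger's Theorem, since the Theorem is stated only for left-safety and ``$\geqslant r$''. Using the standard identities $\LS(\sim\! G)=-\RS(G)$, $\RS(\sim\! G)=-\LS(G)$ and $\sim\!(G+X)=\,\sim\! G+\sim\! X$, together with the facts that conjugation is a bijection of $\mathbb{DS}$ and that $\sim\! 0 = 0$, one gets $G\leqslant 0$ iff $\sim\! G\geqslant 0$. Applying Theorem~\ref{Ettingertheorem} to $\sim\! G$ and translating back through the (analogously defined) notion of right-safety yields the mirror statement $G\leqslant r$ iff $G$ is right-$r$-safe. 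Hence $G=0$ iff $G$ is simultaneously left-$0$-safe and right-$0$-safe.

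Next I would unfold these two safety conditions one level. By definition, $G$ is left-$0$-safe iff $\LS(G)\geqslant 0$ and for every $G^{\rm R}$ there exists a $G^{\rm RL}$ that is left-$0$-safe; applying Theorem~\ref{Ettingertheorem} to the option $G^{\rm RL}$ shows ``$G^{\rm RL}$ is left-$0$-safe'' is the same as ``$G^{\rm RL}\geqslant 0$''. Symmetrically, $G$ is right-$0$-safe iff $\RS(G)\leqslant 0$ and for every $G^{\rm L}$ there exists $G^{\rm LR}\leqslant 0$. This already delivers, in both directions, the option clauses of the Corollary.

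The only point needing care is matching the score condition, and this is where I expect the main obstacle to lie. Left-$0$-safety alone gives only $\LS(G)\geqslant 0$, and right-$0$-safety only $\RS(G)\leqslant 0$, whereas the Corollary asserts the sharper $\LS(G)=\RS(G)=0$; the missing inequalities $\LS(G)\leqslant 0$ and $\RS(G)\geqslant 0$ cannot be extracted from safety, since one may not assume $\LS\leqslant\RS$ in $\mathbb{DS}$ (Zugzwangs are allowed). For the forward direction I would recover them directly from Definition~\ref{def:equality} evaluated at $X=0$: from $G\geqslant 0$ one gets $\LS(G)\geqslant\LS(0)=0$ and $\RS(G)\geqslant 0$, and from $G\leqslant 0$ one gets $\LS(G)\leqslant 0$ and $\RS(G)\leqslant 0$, so jointly $\LS(G)=\RS(G)=0$. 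For the reverse direction the equalities are hypotheses, so $\LS(G)=0\geqslant 0$ and $\RS(G)=0\leqslant 0$ supply precisely the score requirements of left- and right-$0$-safety; combined with the given option clauses (read through Theorem~\ref{Ettingertheorem} as ``left-$0$-safe'' and ``right-$0$-safe''), the Theorem returns $G\geqslant 0$ and $G\leqslant 0$, that is $G=0$. Everything apart from this reconciliation of the one-sided safety bounds with the two-sided equality is a routine unfolding of definitions.
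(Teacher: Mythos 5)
Your proposal is correct and follows the same route the paper (implicitly) takes: the paper obtains the corollary from Ettinger's Theorem simply by ``writing the explicit condition for safety,'' i.e., splitting $G=0$ into $G\geqslant 0$ and $G\leqslant 0$, converting each via the theorem (and its conjugate form) into left-/right-$0$-safety, and unfolding one level. Your additional care in reconciling the one-sided bounds $\LS(G)\geqslant 0$ and $\RS(G)\leqslant 0$ with the stated equalities $\LS(G)=\RS(G)=0$ (using $X=0$ in the forward direction) just fills in a detail the paper leaves unsaid.
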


In \cite{Ettin1996} (p. 48), it is proved that $G\in \mathbb{DS}$ is invertible iff $G +\sim \! G = 0$.
In fact, Ettinger proved that there are non-invertible elements and
$\mathbb{DS}$ is just a semigroup (monoid). Namely, consider
$G=\langle \,  \langle \,  1 \mid -1  \, \rangle \mid \langle \,  1 \mid 1  \, \rangle  \, \rangle $.
 Then $\sim \! G = \langle \, \langle \, -1\mid -1\, \rangle \mid \langle \,
1\mid -1 \, \rangle \, \rangle $. Now $\LS(G + \sim \! G)=-2$. Therefore,
using the distinguishing game $0$, $\LS(G + \sim \! G+0) < \LS(0+0)$ and, so,
by Definition~\ref{def:equality}, $G + \sim \! G\neq 0$.

Lacking a group structure, how do we, constructively, know if $G\geqslant  r$?
As explained in Theorem \ref{Ettingertheorem}, Ettinger solved the problem with the concept of $r$-safety.

The Greediness  principle holds in $\mathbb{DS}$. Also, $\mathbb{DS}$
does not have hot-atomic-games;
the only games with empty sets of options are the real numbers.

Also, in \cite{Ettin1996}, despite there being reductions (domination and reversibility)
this does not lead to a unique canonical form for the equivalence classes.

Finally, since $\mathbb{DS}$ is a dicot universe and $\mathbb{Np}$ is not
($\{ 0 \mid \emptyset \}$ is a game in $\mathbb{Np}$)
there is no natural order-preserving embedding from $\mathbb{Np}$
into $\mathbb{DS}$. The only relation is with the subgroup of
$\mathbb{Np}$ constituted by the dicot games (formerly called \textit{all-small}) (\cite{AlberNW2007} p.185-).

\begin{figure}[h]
\begin{center}
\scalebox{0.66}{
\psset{xunit=1.0cm,yunit=1.0cm,algebraic=true,dotstyle=o,dotsize=3pt 0,linewidth=0.8pt,arrowsize=3pt 2,arrowinset=0.25}
\begin{pspicture*}(-3.5,-7)(10.8,5.5)
\pscircle[linewidth=2pt](-0.8,2.22){2.56}
\pscircle[linewidth=2pt](-0.8,2.22){0.9}
\rput[tl](-1.1,5.3){$\mathbb{Np}$}
\rput[tl](-1.62,3.84){$Dicot\,\,\mathbb{Np}$}
\pscircle[linewidth=2pt](7.9,2.02){2.56}
\rput[tl](7.72,5){$\mathbb{DS}$}
\pscircle[linewidth=2pt](3.66,-3.92){2.56}
\pscircle[linewidth=2pt](3.66,-3.92){0.9}
\rput[tl](3.36,-0.62){$\mathbb{GS}$}
\rput[tl](3.1,-2.6){$Dicots$}
\psline{->}(0.1,2.2)(5.34,2.18)
\rput[tl](1.95,2.94){$Order\,\,Embedding$}
\psline{->}(0.47,-0.01)(1.97,-2)
\rput[tl](-2.4,-0.66){$Order\,\,Embedding$}
\psline{->}(6.63,-0.2)(4.31,-3.29)
\rput[tl](6.3,-1.06){$Not\,\,Order\,\,Embedding$}
\rput[tl](7.2,2.1){$Theorem\,\,7$}
\rput[tl](2.75,-5.3){$Theorem\,\,4$}
\end{pspicture*}
}
\caption{Inclusion maps of $\mathbb{Np}$, $\mathbb{DS}$ and $\GB$}\label{fig:compare}
\end{center}
\end{figure}
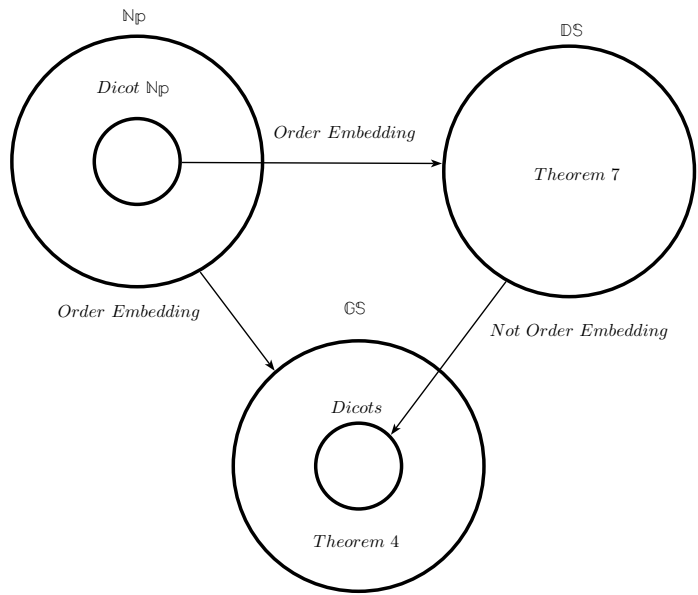

Although the inclusion map from Ettinger's universe to Guaranteed Scoring is not order preserving, some nice properties still hold, using a refined setting of pass-allowed stops; see Figure \ref{fig:compare}. See also item 4 in Section \ref{sec:Ste}.

\subsection{Stewart's general Scoring games}\label{sec:Ste}
The Scoring-universe that Stewart defined \cite{Stewa2011}, $\mathbb{S}' \subset \mathbb{S}$  (the only restriction is $\langle \, \emptyset^r\mid\emptyset^{s} \, \rangle\in \mathbb{S}'$ implies $r=s$), does allow non-stable games, in particular hot-atomic-games, such as
$\langle \, \emptyset^9\mid-5\, \rangle$.
It has some disadvantageous properties.\\

\noindent \textbf{1)}
 In $\mathbb{S}'$, we have $G=0\Rightarrow G\cong \langle \,   \emptyset^0\mid \emptyset^0  \, \rangle $, where `$\cong$' denotes identical game trees. (See the discussion before Corollary \ref{0corollary} about size of equivalence classes.)
 The argument is as follows. Suppose that $G=0$ is such that $\GL \neq\emptyset$ and consider the distinguishing
game $X=\langle \,   \emptyset^a\mid b  \, \rangle $, where $a>0$ and $b$ is less than all the real numbers
that occur in any follower of $G$. If Left starts in $G+X$ she loses; if Left starts in
$0+X$ she wins (\cite{Johns2014}, p.36). Therefore, $G\neq 0$.
This situation occurs because $X$ is a ``strange'' game
where Left wants to have the turn but she has no moves. So, the only invertible games
of $\mathbb{S}'$ are the numbers.\\

\noindent \textbf{2)} $\mathbb{S}'$ is non-natural.
In a natural universe we have $\Num{1}>0$. Recall,
$\zeta(1)=\langle \,   0 \mid \emptyset^0  \, \rangle =  \Num{1}$ and
$\zeta(0)=\Num{0}=0$.
Consider the distinguishing game $X=\langle \,   \emptyset^2\mid -3  \, \rangle $:
\begin{align*}
\LS(\langle \,   0 \mid \emptyset^0   \, \rangle + \langle \,   \emptyset^2 \mid -3  \, \rangle ) &= -3,\\
\LS(0+\langle \,   \emptyset^2 \mid -3   \, \rangle )&=2, \text{ since Left has no move.}
\end{align*}
Thus, by Definition \ref{def:equality}, $\Num{1}>0$ is not true in $\mathbb{S}'$.\\

\noindent \textbf{3)}
The Greediness principle fails in $\mathbb{S}'$.
Consider $G=\langle \,   0\mid \emptyset^0  \rangle$ and $H=\langle \,   \emptyset^0\mid \emptyset^0  \, \rangle $.
 There are instances when Left does not prefer $G$; for example, if $X=\langle \,   \emptyset^1\mid -1  \, \rangle $, then $\LS(H+X) = 1$, $\RS(H+X) = -1$ and $\LS(G+X) = -1$,
$\RS(G+X) = -1$. Thus, by Definition \ref{def:equality}, $G\not\geqslant  H$.
\\

\noindent  \textbf{4)} Clearly $\mathbb{DS}\subset \mathbb{S}$ but the inclusion
map is not order-preserving (see also Figure~\ref{fig:compare} for a diagram of the results in this paper). Consider the dicot game $G=\langle \,   \langle \,   1\mid 1  \, \rangle \mid \langle \, 1\mid 1  \, \rangle  \, \rangle$.
By Corollary \ref{Ettingercorollary}, $G>0$ in $\mathbb{DS}$.
However, in $\mathbb{S}$,
using the distinguishing game $X=\langle \,   \emptyset^{\frac{1}{2}}\mid \langle \,   \langle \,   -2\mid 2\, \rangle \mid -3  \, \rangle   \, \rangle $,
of course, $\LS(0+X) = \frac{1}{2} > 0$. But, in the game $G+X$, Left has only one legal move, that to $\langle \,   1\mid 1  \, \rangle + X$, and so
Right goes to $\langle \,   1\mid 1  \, \rangle + \langle \,   \langle \,   -2\mid 2  \, \rangle \mid -3  \, \rangle $, which gives $\LS(G'+X) = -1 < 0$. Thus, $G>0$ in $\mathbb{DS}$, but $G\ngeqslant0$ in $\mathbb{S}$.

\subsection{Johnson's well-tempered, dicot Scoring games}
Johnson's \cite{Johns2014} universe consists of dicot games in which, for a given game $G$,
the length of any play (distance to any leaf on the game tree) has the same parity.
The games are called \textit{even-tempered} if all the lengths are even and
called \textit{odd-tempered} otherwise.
A game, $G$ is \textit{inversive} if $\LS(G+X)\geqslant \RS(G+X)$ for every even-tempered
game $X$. Although the whole set of games is not well-behaved,
for example,  canonical forms do not exist, each inversive game has a canonical form and
an additive inverse which is equal to its conjugate; in fact they form an abelian group.
Moreover, $G\geqslant H$ if $G$ and $H$ have the same `temper' and $\RS(G-H)\geqslant 0$.

\section{Normal-play games}\label{sec:Normal-play}

The definitions for Normal-play are standard, along with other material,
in any of \cite{BerleCG2001-2004, AlberNW2007, Siege2013}.

Under Normal-play, there are four outcome classes:

  \small
  \begin{center}
  \begin{tabular}{|l|l|l|}
    \hline
    Class & Name & Definition \\
     \hline
    ${\cal N}$ & incomparable & The ${\cal N}$ext player wins\\
     \hline
    ${\cal P}$ & zero & The ${\cal P}$revious player wins\\
   &&(more precisely ${\cal N}$ext player loses) \\
     \hline
    ${\cal L}$ & positive & ${\cal L}$eft wins regardless of who plays first \\
     \hline
    ${\cal R}$ & negative & ${\cal R}$ight wins regardless of who plays first \\
    \hline
  \end{tabular}
  \end{center}
  \normalsize

  \noindent
  We write $\circ(G)$ to designate the outcome of $G$. The fundamental
  definitions of Normal-play structure are based in these outcomes.

  \begin{definition} \label{Equivalence} (Equivalence)
  $G=H\,\,\mathrm{if} \circ(G+X)=\circ(H+X)\,\,\mathrm{for}\,\mathrm{all}\,\mathrm{games}\,X.$
  \end{definition}

 The convention is that positive is good for Left and negative for Right and
  the outcomes are ordered: $\mathcal{L}$ is greater than
  both $\mathcal{N}$ and $\mathcal{P}$, which in turn are both greater than $\mathcal{R}$,
  and, finally, $\mathcal{N}$ and $\mathcal{P}$ are incomparable.
In Normal-play games, there is a way to check for the equivalence of games $G$ and $H$,
 which does not require considering any third game:
$$\textit{$G=H$ iff $G-H$ is a second player win. }$$

  \begin{definition} \label{Order} (Order) $G\geqslant  H$ if $\forall X$,  $o(G+X)\geqslant  o(H+X)$.
  \end{definition}

   \begin{definition} \label{Number} (Number) A game $G$ is a number if all
    $G^{\rm L}$ and $G^{\rm R}$ are numbers and
for each pair of options, $G^{\rm L}<G^{\rm R}$.
    \end{definition}

Note that $0=\{\emptyset \mid \emptyset\}$ is a number since there are no options to compare.
 If the number is positive, then this represents the number of moves advantage
 Left has over Right. An important concept is the `stop'---the
 best number that either player can achieve when going first.

  \begin{definition}\label{Stops} (Stops)
 The \emph{Left-stop} and the \emph{Right-stop} of a game $G$ are:
 \begin{eqnarray}
  LS(G) &=& \begin{cases}G             & \text{if $G$ is a number}, \\
                    \max(RS(G^{\rm L})) & \text{if $G$ is not a
  number};\end{cases} \\
  RS(G) &=& \begin{cases}G             & \text{if $G$ is a number}, \\
                    \min(LS(G^{\rm R})) & \text{if $G$ is not a number}.\end{cases}
  \end{eqnarray}
  \end{definition}

  There are two possible relations between the stops.
  One: $LS(G) > RS(G)$ and $G$ is referred to as a \textit{hot} game.
  The term was chosen to give the idea that the players really want to play first in $G$.
   Two: $LS(G) = RS(G)$ and $G$ is either a number or a \emph{tepid}
   game such as a position in \textsc{nim} and \textsc{clobber} \cite{AlberNW2007}. In the presence of a hot game,
    moves in a tepid game are not urgent, and moves in numbers are never urgent.
The situation $LS(G) < RS(G)$ does not occur, since then $G$ would be
 a Zugzwang game, but in Normal-play these games are numbers and so
 the relationship reverts to $LS(G) = RS(G)$.

The Lawyer's offer should obviously have been accepted if the question would have concerned Normal-play, because here the worst thing imaginable is to run out of options.

\bibliographystyle{plain}
\bibliography{games3}

\end{document}